\documentclass[11pt,a4paper]{article}
\usepackage[utf8]{inputenc}
\usepackage[left=1in, right=1in, top=1in, bottom=1in]{geometry}
\usepackage{amsmath}
\usepackage{amsfonts}
\usepackage{amssymb}
\usepackage{amsthm}
\usepackage{mathrsfs}
\usepackage{graphicx}
\usepackage{hhline}
\usepackage{color,xcolor}
\usepackage{caption}
\usepackage{enumerate}
\usepackage{mathtools}
\usepackage{hyperref}
\hypersetup{colorlinks = true, linkbordercolor = {white}}
\usepackage{textcomp}
\usepackage{aliascnt}
\usepackage{setspace}


\usepackage{tikz}
\usetikzlibrary{calc,decorations.pathreplacing,shapes,matrix,arrows,decorations.markings}
\tikzset{draw half paths/.style 2 args={%
  decoration={show path construction,
    lineto code={
      \draw [#1] (\tikzinputsegmentfirst) -- 
         ($(\tikzinputsegmentfirst)!0.5!(\tikzinputsegmentlast)$);
      \draw [#2] ($(\tikzinputsegmentfirst)!0.5!(\tikzinputsegmentlast)$)
        -- (\tikzinputsegmentlast);
    }
  }, decorate
}}
\tikzset{draw third paths/.style 2 args={%
  decoration={show path construction,
    lineto code={
      \draw [#1] (\tikzinputsegmentfirst) -- 
         ($(\tikzinputsegmentfirst)!0.33!(\tikzinputsegmentlast)$);
      \draw [#2] ($(\tikzinputsegmentfirst)!0.67!(\tikzinputsegmentlast)$)
        -- (\tikzinputsegmentlast);
    }
  }, decorate
}}
\tikzset{draw 2third paths/.style 2 args={%
  decoration={show path construction,
    lineto code={
      \draw [#1] (\tikzinputsegmentfirst) -- 
         ($(\tikzinputsegmentfirst)!0.67!(\tikzinputsegmentlast)$);
      \draw [#2] ($(\tikzinputsegmentfirst)!0.33!(\tikzinputsegmentlast)$)
        -- (\tikzinputsegmentlast);
    }
  }, decorate
}}

\tikzset{middlearrow/.style={
        decoration={markings,
            mark= at position 0.5 with {\arrow{#1}} ,
        },
        postaction={decorate}
    }
}

\tikzset{->-/.style={decoration={
  markings,
  mark=at position #1 with {\arrow{>}}},postaction={decorate}}}
  

\newtheorem{theorem}{Theorem}[section]
\newaliascnt{lemma}{theorem}
\newtheorem{lemma}[lemma]{Lemma} 
\aliascntresetthe{lemma}
\newaliascnt{proposition}{theorem}
\newtheorem{proposition}[proposition]{Proposition}   
\aliascntresetthe{proposition}
\newaliascnt{corollary}{theorem}
\newtheorem{corollary}[corollary]{Corollary}
\aliascntresetthe{corollary}
\newaliascnt{question}{theorem}
\newtheorem{question}[question]{Question}
\aliascntresetthe{question}
\newaliascnt{conjecture}{theorem}

\aliascntresetthe{conjecture}

\theoremstyle{definition}

\newtheorem*{notation}{Notation}
\newtheorem*{claim}{Claim}

\newaliascnt{example}{theorem}
\newtheorem{example}[example]{Example}
\aliascntresetthe{example}

\newaliascnt{definition}{theorem}

\aliascntresetthe{definition}

\newaliascnt{construction}{theorem}
\newtheorem{construction}[construction]{Construction}
\aliascntresetthe{construction}

\theoremstyle{remark}
\newtheorem*{remark}{Remark}

\newcommand{\mc}[1]{\mathcal{#1}}
\newcommand{\mb}[1]{\mathbb{#1}}

\newcommand{\vb}[1]{\mathbf{#1}}
\newcommand{\G}{\Gamma}
 
\renewcommand{\emptyset}{\varnothing}
\newcommand{\stc}{\; :\; }

\DeclareMathOperator{\Bi}{Bi}
\DeclareMathOperator{\Aut}{Aut}
\DeclareMathOperator{\Sym}{Sym}


\title{Every group-embeddable monoid arises as the bimorphism monoid of some graph}
\author{ Thomas D. H. Coleman\footnotemark[1]\; and Isaac K. Dilley\footnotemark[2]}

\begin{document}

\maketitle
\footnotetext[1]{Corresponding author. School of Mathematics and Statistics, University of St Andrews, St Andrews, KY16 9SS, United Kingdom. Email: \texttt{tdhc@st-andrews.ac.uk}.}
\footnotetext[2]{This article is based on work done during Isaac Dilley's summer Laidlaw scholarship at the University of St Andrews during the summers of 2020 and 2021. Isaac is currently working in industry.}

\begin{abstract}
Generalizing results of Frucht and de Groot/Sabidussi, we demonstrate that every group-embeddable monoid is isomorphic to the bimorphism monoid of some graph. \\[0.2em]

\emph{Keywords:} infinite graph theory, group-embeddable monoids, bimorphism monoids. \\
\emph{2020 Mathematics Subject Classification:} 05C63 (infinite graphs), 20M30 (representation of semigroups, actions of semigroups on sets)
\end{abstract}

\section{Introduction and preliminaries}

Answering a question posed by K\"{o}nig in 1936 \cite{konig1936theorie}, Frucht proved that every finite group is isomorphic to the automorphism group of some finite graph; a result now known widely as \emph{Frucht's theorem} \cite{frucht1939herstellung}. Since then, there have been considerable efforts to generalize Frucht's theorem in a number of different directions. Frucht himself showed that every finite group arises as the automorphism group of a $3$-regular graph \cite{frucht1949graphs}. Sabidussi followed this by proving an analogous result for $k$-regular graphs and $k$-vertex-connected graphs (both for $k\geq 3$) and $k$-chromatic graphs (for $k\geq 2$) \cite{sabidussi1957graphs}, and Mendelsohn followed suit for strongly regular graphs \cite{mendelsohn1978every}. These theorems are part of a class of results known as \emph{universality theorems}, for which there are versions for other first-order structures (such as distributive lattices and commutative rings); see Babai for more \cite{babai1996automorphism}. Generalizing in a different direction (and working independently of each other), de Groot and Sabidussi proved that every group is isomorphic to the automorphism group of a graph \cite{de1959groups, sabidussi1960graphs}. A similar result is true for monoids; every monoid is isomorphic to the endomorphism monoid of some graph (see \cite{babai1996automorphism}).

Endomorphism monoids of a graph $\G$ encompass many different kinds of self-map, depending on the property of the underlying function on the vertex set $V\G$. In general, endomorphisms of $\Gamma$ must preserve edges by definition but may also add in edges or shrink a non-edge to a single point. By imposing additional properties of the underlying function of the endomorphism, behaviour like this is restricted. For instance, injective graph endomorphisms, called \emph{monomorphisms}, may only add in edges, but these may not shrink a non-edge to a single point as this would violate injectivity of the function. Of these endomorphisms, the most paradoxical (certainly for an algebraist!) is the idea of a \emph{bimorphism} of a graph $\Gamma$; a bijective endomorphism of $\Gamma$ that preserves edges but may change non-edges to edges -- and so is not necessarily an isomorphism. Since bimorphisms of $\G$ are permutations on the underlying vertex set $V\G$ (and hence elements of the symmetric group on $V\Gamma$), it follows that the monoid $\Bi(\G)$ of all bimorphisms of $\G$ is a \emph{group-embeddable monoid}.

Group-embeddable monoids have played an important part in the development of semigroup theory as a subject. For instance, a celebrated theorem of Ore states that a monoid $M$ is group-embeddable if and only if it is cancellative and satisfies Ore's condition on ideals (for $a,b\in M$ then $aM \cap bM \neq \emptyset$) (see \cite{clifford1961algebraic}). More recently, viewing group-embeddable monoids as submonoids of some symmetric group (called \emph{permutation monoids}) has led to a strong connection between group-embeddable monoids and bimorphisms of first-order structures. For instance, a submonoid of the infinite symmetric group $\Sym(\mb{N})$ is closed (under the pointwise convergence topology) if and only if it is isomorphic to the bimorphism monoid of some first-order structure \cite{coleman2019permutation}.

Given the celebrated results of Frucht and de Groot/Sabidussi, and the recent work on bimorphisms of first-order structures, it is natural to ask; \emph{is every group-embeddable monoid isomorphic to the bimorphism monoid of a graph}? In this article, we answer this question in the affirmative:

\begin{theorem}\label{maintheorem}
Let $M$ be a group-embeddable monoid. Then there exists a graph $\Gamma^*$ such that $\Bi(\Gamma^*)\cong M$.
\end{theorem}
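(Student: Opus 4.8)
The plan is to mimic the classical de Groot/Sabidussi strategy for realizing a group as an automorphism group, but adapted to bimorphisms and to the larger class of group-embeddable monoids. Since $M$ is group-embeddable, I would first fix an embedding $M \hookrightarrow G$ into a group $G$; concretely, one may take $G$ to be the group of fractions (or, more cheaply, realize $M$ as a submonoid of $\Sym(\Omega)$ for some set $\Omega$ via the left regular representation of $G$). The key point is that $M$ now acts faithfully on a set by bijections, and we want a graph $\Gamma^*$ whose edge-preserving bijections are \emph{exactly} the bijections coming from $M$ — not the larger group $G$, and not merely the invertible ones. So the construction must do two things simultaneously: (i) force every bimorphism to permute a distinguished "skeleton" in a way prescribed by an element of $G$, and (ii) use the fact that bimorphisms may turn non-edges into edges to \emph{break} the symmetry of $G$ down to $M$, by attaching to each skeleton vertex a gadget that can be mapped into another gadget precisely when the corresponding group element lies in the submonoid $M$.

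**Key steps.**

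First I would build a rigid "coordinate" graph on a vertex set indexed by $G$ (say a Cayley-type graph together with enough auxiliary rigidifying structure — a standard Frucht-type gadget attached to each generator colour, using graphs of distinct large odd girth or distinct finite rigid graphs as "colours") so that \emph{every} bimorphism of this part is forced to be a bijection induced by left multiplication by some element of $G$; crucially I must check that the rigidity gadgets cannot be collapsed or have non-edges promoted to edges in a way that creates spurious bimorphisms — this is where working with bimorphisms rather than automorphisms requires care, and the usual trick is to make the gadgets themselves have no non-edges available to exploit (e.g. complete-graph-based or otherwise "bimorphism-rigid" gadgets, cf. the observation in the introduction that on structures with no non-edges a bimorphism is an isomorphism). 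Second, to each vertex $g \in G$ I attach a second gadget $T_g$, designed so that an edge-preserving bijection is \emph{allowed} to send the block at $g$ to the block at $h$ only when $g^{-1}h \in M$: the natural device is an asymmetric "one-way" gadget consisting of a non-edge that a bimorphism may legally fill in but never remove, arranged so that the partial order it encodes matches membership in $M$. Third, I would verify that $\Bi(\Gamma^*)$ is then exactly $\{\,x \mapsto mx : m \in M\,\}$: edge-preservation plus bijectivity forces the skeleton to move by some $g\in G$, the one-way gadgets force $g \in M$, and conversely every $m\in M$ genuinely induces a bimorphism because left-multiplication by $m$ maps each $T_g$ into $T_{mg}$ compatibly (it need not be onto the edge set, which is fine — that is exactly the slack a bimorphism is permitted). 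Finally, closure under composition and the identity in $M$ give that this is a submonoid isomorphic to $M$, completing the proof; I would also double-check that $\Gamma^*$ is a graph in the paper's sense (simple, undirected) and record that the construction works uniformly for finite and infinite $M$.

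**Main obstacle.**

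The hard part will be step two: designing the "one-way" gadget $T_g$ so that its behaviour under edge-preserving bijections encodes membership in the submonoid $M$ — and only that — without accidentally enlarging the bimorphism monoid back up to $G$ or, worse, introducing bimorphisms that do not respect the skeleton at all. The subtlety is genuinely about the difference between automorphisms and bimorphisms: the extra freedom (promoting non-edges to edges) is both the tool that lets us cut $G$ down to a non-group $M$ and the danger that lets unwanted maps sneak in, so the gadget has to be engineered so that the \emph{only} non-edges a bimorphism can usefully promote are the ones that implement multiplication by an element of $M$. I expect the bulk of the paper's proof to consist of specifying this gadget precisely and verifying, via a careful case analysis on where a putative bimorphism can send each type of vertex, that no spurious maps arise.
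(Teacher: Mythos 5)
Your first stage (embed $M$ in a group $G$, build a modified Cayley graph with bimorphism-rigid edge gadgets so that every bimorphism of that part is left multiplication by an element of $G$) matches the paper's \autoref{cayley} and \autoref{biautG}, although your suggested gadgets are underspecified: finite rigid graphs or ``complete-graph-based'' pieces will not do in general, since one needs $|G\setminus\{e\}|$ pairwise non-bimorphism-equivalent rigid gadgets of cardinality exceeding $|G|$ (the cardinality bound is used in the block-preservation arguments), and the paper gets bimorphism-rigidity precisely from acyclicity: the gadgets are de Groot's rigid trees (\autoref{brigid}), and a bijective homomorphism between trees cannot add an edge without creating a cycle (\autoref{nonbieq}).

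The genuine gap is your second step, which you yourself flag as the unresolved ``main obstacle'': you posit a family of per-vertex ``one-way'' gadgets $T_g$ such that $T_g$ can be mapped to $T_h$ exactly when $g^{-1}h\in M$, but you give no construction, and this is exactly the heart of the theorem. Two concrete problems with the sketch: first, the side of the condition must match the side on which the skeleton acts --- with left multiplication $x\mapsto mx$ a per-pair condition ``$T_g$ maps to $T_h$ iff $g^{-1}h\in M$'' would force $g^{-1}mg\in M$ for \emph{all} $g$, a conjugation-twisted condition rather than $m\in M$; one needs $hg^{-1}\in M$, uniformly in the acting element. Second, realizing the resulting preorder on $G$ by existence of bijective homomorphisms between attached gadgets is itself a nontrivial representation problem, essentially of the same difficulty as the theorem, and it interacts badly with rigidity (gadgets that can map into one another threaten to create bimorphisms that move gadget vertices without moving the skeleton). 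The paper's actual device is different and much simpler: a single global top layer of vertices $(g,\bullet)$, $g\in G$, each joined to $g$, with a clique on $(B,\bullet)$ where $B\cong M$, and pendant vertices over $G\setminus B$ (\autoref{bigraph}). Preservation of the clique forces $hg^{-1}B\subseteq B$, hence $hg^{-1}\in B$, while the promotable non-edges are exactly those at vertices $(x,\bullet)$ with $x\notin B$; the remaining work (\autoref{holefilleragain} and the triangle-freeness argument in the proof of \autoref{maintheorem}) shows top-layer vertices cannot migrate into $\Gamma$ and that a bimorphism is determined by where it sends one group vertex. Without this (or some equivalent concrete mechanism), your outline does not yet constitute a proof.
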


Note that if $M$ is finite, it is a group; since $\Bi(\Gamma) = \Aut(\Gamma)$ for any finite graph $\Gamma$, this special case is precisely Frucht's theorem. We therefore restrict our attention to when $M$ is infinite. If $M$ is a group, then this reduces to the infinite version of Frucht's theorem proved by de Groot and Sabidussi. This theorem then generalizes both Frucht's original theorem and its generalization to infinite groups. 

We use standard graph-theoretic terminology throughout; a good source is \cite{diestel2000graph}. Unless specified, in this article all graphs are loopless, undirected, and unlabelled. A graph is \emph{rigid} if its automorphism group is trivial. Any monoid $M$ contains a \emph{group of units} which consist of all invertible elements of $M$; for more background information on semigroups and monoids, see \cite{howie1995fundamentals}. Functions act on the right of their arguments, and we compose maps from left to right.

\section{Strategy and motivating examples}

Let $G$ be a finite group. The proof of Frucht's theorem relies on the Cayley graph of $G$ with respect to some generating set $A$ of $G$ such that $e\notin A$; by definition, this is a loopless, labelled, directed graph. This Cayley graph is then turned into a simple undirected graph $\Gamma$ by the replacement of edges labelled by $a$ by a rigid `gadget' $R_a$. These gadgets are chosen to be pairwise non-isomorphic so if $a\neq b\in A$, then $R_a\ncong R_b$; this ensures that different generating elements are distinguished from each other. Since the gadgets are put in place to preserve the symmetries of the Cayley graph without adding any more, one can prove that $\Aut(\Gamma) \cong G$ via the action of $G$ by left multiplication on the modified Cayley graph; see \cite{cameron1994combinatorics} for finer details. This strategy generally extends to the infinite case; however, the gadgets tend to be far more intricate due to the nature of infinite cardinals. This is expanded upon in \autoref{brigid} for use throughout the article.

To test this strategy for group-embeddable monoids, one could look at the nicest group-embeddable monoid which is not a group; the \emph{infinite monogenic semigroup with identity} $(\mathbb{N}_0,+)$, which is embeddable via inclusion into the infinite cyclic group $(\mathbb{Z},+)$. It could be hoped that $\Bi(\Gamma)\cong (\mathbb{N}_0,+)$; but we will see now that this is not the case.

\begin{example}
Consider the Cayley graph $\Gamma'$ of $M = (\mathbb{N},+)$ with respect to the generating set $A = \{1\}$ of $M$, which is given in \autoref{1fig}.

\begin{center}
\begin{figure}[h]
\centering
\begin{tikzpicture}[node distance=2cm,inner sep=0.7mm,scale=1.7]
\node (-3b) at (-3,-0.5) [circle,draw,label=below:$0$] {};
\node (-2b) at (-2,-0.5) [circle,draw,label=below:$1$] {};
\node (-1b) at (-1,-0.5) [circle,draw,label=below:$2$] {};
\node (0b) at (0,-0.5) [circle,draw,label=below:$3$] {};
\node (1b) at (1,-0.5) [circle,draw,label=below:$4$] {};
\node (2b) at (2,-0.5) [circle,draw,label=below:$5$] {};
\node (3b) at (3,-0.5) [circle,draw,label=below:$6$] {};
\node (+b) at (4,-0.5) {};
\node (a) at (-2.5,-0.35) [color=black!30!white] {$1$};
\node (a) at (-1.5,-0.35) [color=black!30!white] {$1$};
\node (a) at (-0.5,-0.35) [color=black!30!white] {$1$};
\node (a) at (0.5,-0.35) [color=black!30!white] {$1$};
\node (a) at (1.5,-0.35) [color=black!30!white] {$1$};
\node (a) at (2.5,-0.35) [color=black!30!white] {$1$};

\draw[middlearrow={latex}] (-3b) -- (-2b);
\draw[middlearrow={latex}] (-2b) -- (-1b);
\draw[middlearrow={latex}] (-1b) -- (0b);
\draw[middlearrow={latex}] (0b) -- (1b);
\draw[middlearrow={latex}] (1b) -- (2b);
\draw[middlearrow={latex}] (2b) -- (3b);
\draw [thick,loosely dotted] (3b) -- (+b);
\end{tikzpicture}
\caption{Cayley graph of $M = (\mb{N},+)$ with respect to $A = \{1\}$.}\label{1fig}
\end{figure}
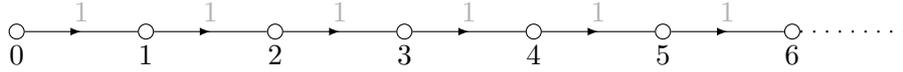
\end{center}
\vspace{-0.8cm}

Replacing each instance of the labelled directed edge corresponding to the single generator $1$ with an undirected edge gives a undirected graph $\Gamma$; see \autoref{2fig}. 

\begin{center}
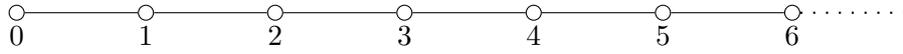
\begin{figure}[h]
\centering
\begin{tikzpicture}[node distance=2cm,inner sep=0.7mm,scale=1.7]

\node (-3b) at (-3,-0.5) [circle,draw,label=below:$0$] {};
\node (-2b) at (-2,-0.5) [circle,draw,label=below:$1$] {};
\node (-1b) at (-1,-0.5) [circle,draw,label=below:$2$] {};
\node (0b) at (0,-0.5) [circle,draw,label=below:$3$] {};
\node (1b) at (1,-0.5) [circle,draw,label=below:$4$] {};
\node (2b) at (2,-0.5) [circle,draw,label=below:$5$] {};
\node (3b) at (3,-0.5) [circle,draw,label=below:$6$] {};
\node (+b) at (4,-0.5) {};

\draw (-3b) -- (-2b);
\draw (-2b) -- (-1b);
\draw (-1b) -- (0b);
\draw (0b) -- (1b);
\draw (1b) -- (2b);
\draw (2b) -- (3b);
\draw [thick,loosely dotted] (3b) -- (+b);

\end{tikzpicture}
\caption{Graph $\Gamma$ obtained from the Cayley graph of $M = (\mb{N}_0,+)$ with respect to $A = \{1\}$.}\label{2fig}
\end{figure}
\end{center}
\vspace{-0.8cm}

Unfortunately, this graph has no non-trivial bijective endomorphisms. To see this, consider a bijection $\beta:\Gamma\to\Gamma$ that preserves edges; the aim is to prove by induction that $n\beta = n$ for all $n\in\mb{N}_0$. Since $\beta$ is a bijection, it must map some natural number to $0$; say $n$. If $n>0$, then $n$ has degree $2$, being adjacent to $n-1$ and $n+1$. As $0$ has degree $1$ and $\beta$ preserves edges, it follows that $n = 0$. Therefore, $\beta$ fixes $0$, completing the base case. For the inductive step, assume that $\beta$ fixes the first $k$ vertices. Since $\beta$ is a bijection, it must map some natural number $m$ to $k+1$; as $\beta$ preserves edges, it must send either $m-1$ or $m+1$ to $k$. Neither of these can happen as $k\beta = k$; so it follows that $(k+1)\beta = k+1$, completing the inductive step. This implies that $\Bi(\Gamma) \cong \{e\}$; in particular, this is not $(\mathbb{N},+)$ as hoped for.
\end{example}

So what was the problem here? To represent the group-embeddable monoid $M$ (that is not a group) as the bimorphism monoid of some graph $\Gamma$, we need to have bimorphisms of $\Gamma$ that are not automorphisms! So any element of $M$ not in its group of units should be represented by a bimorphism that changes a non-edge to an edge. To that end, there are a few considerations that need to be made when defining such a graph $\Gamma$:

\begin{itemize}
\item extra vertices need to be added to the Cayley graph of $M$ with respect to some generating set $A$ to allow things to move more freely;
\item bimorphisms of $\Gamma$ should add in edges where the group-embeddable monoid $M$ is not a group;
\item a way to identify those vertices representing elements of the group-embeddable monoid $M$ must be retained. 
\end{itemize}

This leads to our motivating example, first found in Coleman's PhD thesis \cite{coleman2017automorphisms}.

\begin{example}[\cite{coleman2017automorphisms}]\label{urexample}
Consider a graph $\G$ with vertex set $\mb{Z}\times \{0,1\}$, with adjacencies given by
\begin{itemize}
\item $(a,0)\sim (b,1)$ if and only if $a = b$;
\item $(a,0)\sim (b,0)$ if and only if $|a-b| = 1$, and;
\item $(a,1)\sim (b,1)$ if and only if $a\leq 0$ and $|a-b| = 1$.
\end{itemize}
See \autoref{3fig} for a diagram.

\begin{center}
\begin{figure}[h]
\centering
\begin{tikzpicture}[node distance=2cm,inner sep=0.7mm,scale=1.7]
\node (-a) at (-4,0.5) {};
\node (-3a) at (-3,0.5) [circle,draw,label=above:$(-3{,}1)$] {};
\node (-2a) at (-2,0.5) [circle,draw,label=above:$(-2{,}1)$] {};
\node (-1a) at (-1,0.5) [circle,draw,label=above:$(-1{,}1)$] {};
\node (0a) at (0,0.5) [circle,draw,label=above:$(0{,}1)$] {};
\node (1a) at (1,0.5) [circle,draw,label=above:$(1{,}1)$] {};
\node (2a) at (2,0.5) [circle,draw,label=above:$(2{,}1)$] {};
\node (3a) at (3,0.5) [circle,draw,label=above:$(3{,}1)$] {};
\node (+a) at (4,0.5) {};
\node (-b) at (-4,-0.5) {};
\node (-3b) at (-3,-0.5) [circle,draw,label=below:$(-3{,}0)$] {};
\node (-2b) at (-2,-0.5) [circle,draw,label=below:$(-2{,}0)$] {};
\node (-1b) at (-1,-0.5) [circle,draw,label=below:$(-1{,}0)$] {};
\node (0b) at (0,-0.5) [circle,draw,label=below:$(0{,}0)$] {};
\node (1b) at (1,-0.5) [circle,draw,label=below:$(1{,}0)$] {};
\node (2b) at (2,-0.5) [circle,draw,label=below:$(2{,}0)$] {};
\node (3b) at (3,-0.5) [circle,draw,label=below:$(3{,}0)$] {};
\node (+b) at (4,-0.5) {};
\draw (-3a) -- (-3b);
\draw (-2a) -- (-2b);
\draw (-1a) -- (-1b);
\draw (0a) -- (0b);
\draw (1a) -- (1b);
\draw (2a) -- (2b);
\draw (3a) -- (3b);
\draw [thick,loosely dotted] (-b) -- (-3b);
\draw (-3b) -- (-2b);
\draw (-2b) -- (-1b);
\draw (-1b) -- (0b);
\draw (0b) -- (1b);
\draw (1b) -- (2b);
\draw (2b) -- (3b);
\draw [thick,loosely dotted] (3b) -- (+b);
\draw [thick,loosely dotted] (-a) -- (-3a);
\draw (-3a) -- (-2a);
\draw (-2a) -- (-1a);
\draw (-1a) -- (0a);
\draw [thick,loosely dotted] (3a) -- (+a);
\end{tikzpicture}
\caption{Construction of $\G$ as in \autoref{urexample}.}\label{3fig}
\end{figure}
\end{center}
\vspace{-0.8cm}

As $(0,1)$ is the only vertex of degree $2$, any automorphism of $\G$ must fix $(0,1)$. In addition, an automorphism of $\G$ cannot swap $(0,0)$ and $(-1,1)$, as $(0,0)$ has a degree $1$ vertex at distance $2$ while $(-1,1)$ does not. So $(0,0)$ and $(0,1)$ are fixed, and it follows from this that every vertex is fixed and so $\G$ is rigid. 

However, there do exist bijections on $V\G$ such that only edges are preserved. Consider the shift map $\alpha:\G\to\G$ given by $(a,x)\alpha = (a-1,x)$. Then $\alpha$ preserves all edges and sends the non-edge between $(0,1)$ and $(1,1)$ to the edge between $(-1,1)$ and $(0,1)$. We claim that the only bimorphisms of $\G$ are of the form $\alpha^n$.

\begin{claim} Bi$(\G)\cong (\mb{N}_0,+)$, the infinite monogenic semigroup with identity.
\end{claim}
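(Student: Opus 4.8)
The plan is to show that $\Bi(\G)$ is exactly the cyclic monoid $\{\alpha^n : n\in\mathbb{N}_0\}$ generated by the shift $\alpha$. The easy inclusion comes first: checking each of the three edge types, one sees that $\alpha$ sends edges of $\G$ to edges of $\G$, and $\alpha$ is visibly a bijection of $V\G$, so $\alpha\in\Bi(\G)$; since $\alpha^n\colon (a,x)\mapsto(a-n,x)$, the permutation $\alpha$ has infinite order and $n\mapsto\alpha^n$ is an isomorphism $(\mathbb{N}_0,+)\to\{\alpha^n:n\in\mathbb{N}_0\}\leq\Bi(\G)$. Note also that $\alpha^{-1}$ is \emph{not} edge-preserving, as it sends the edge $\{(-1,1),(0,1)\}$ to the non-edge $\{(0,1),(1,1)\}$; this is why the answer is $(\mathbb{N}_0,+)$ and not $(\mathbb{Z},+)$.

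For the reverse inclusion, fix $\beta\in\Bi(\G)$; we must produce $n$ with $\beta=\alpha^n$. I would first record two structural facts, both consequences of $\beta$ being an injective edge-preserving bijection: (i) $\deg v\leq\deg(v\beta)$ for every $v$, so the $\beta$-preimage of a degree-$1$ vertex is a degree-$1$ vertex, and the degree-$1$ vertices of $\G$ are exactly the $(m,1)$ with $m\geq 1$; (ii) since $\beta$ is injective it maps every $4$-cycle of $\G$ bijectively onto a $4$-cycle of $\G$, and a short check (using that $\G$ is bipartite with girth $4$) shows that the set of vertices lying on a $4$-cycle is precisely $C:=\{(m,x):m\leq 0\}$, whence $\beta(C)\subseteq C$. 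I would then \emph{define} $n$: as $\beta$ is onto, a unique vertex is sent to the pendant $(1,1)$, and by (i) it is a pendant $(n+1,1)$; its only neighbour $(n+1,0)$ is then forced to go to the only neighbour $(1,0)$ of $(1,1)$.

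The core of the argument is two inductions. \emph{Rightward along the comb:} $(n{+}1{+}k,0)\beta=(1{+}k,0)$ and $(n{+}1{+}k,1)\beta=(1{+}k,1)$ for all $k\geq 0$. The three neighbours of $(n{+}1{+}k,0)$ must map onto the three neighbours $(k,0),(2{+}k,0),(1{+}k,1)$ of $(1{+}k,0)$; the two bottom-row neighbours have degree $3$, so by (i) they cannot hit the pendant $(1{+}k,1)$, which pins the pendant down, and the inductive hypothesis settles the remaining two — except at $k=0$, where one argues separately that $(n,0)\beta=(0,0)$ (not $(2,0)$): the "wrong" choice would force, step by step leftwards along the bottom row, the image $(0,0)\beta=(n{+}2,0)\notin C$, contradicting (ii). \emph{Leftward:} from $(n,0)\beta=(0,0)$ one shows $(m,x)\beta=(m-n,x)$ for all $m\leq 0$ by walking back along the bottom row and into $C$; whenever (i) fails to determine the next image, the offending alternative is excluded because it would send two distinct pendants to $(0,1)$ (violating injectivity), or — once $\beta$ is known to agree with $\alpha^n$ on $\{(m,x):m\geq 0\}$, which already forces $\beta$ to map $\{(m,x):m\leq -1\}$ bijectively onto $\{(m,x):m\leq -n-1\}$ — it would violate that bijectivity together with adjacency to the already-determined part. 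Assembling the two inductions gives $v\beta=v\alpha^n$ for every vertex $v$, so $\beta=\alpha^n$, and hence $\Bi(\G)\cong(\mathbb{N}_0,+)$.

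I expect the main obstacle to be precisely the bookkeeping in these two inductions, and in particular the tie-breaking when the inequality $\deg v\leq\deg(v\beta)$ leaves two candidate images: the whole scheme hinges on converting every such ambiguity into one of the two "global" contradictions above (a pendant collision at $(0,1)$, or a vertex of $C$ escaping $C$).
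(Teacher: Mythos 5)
Your proposal is correct in substance and proves the claim by the same elementary means as the paper (degree counting plus local case analysis forcing every edge-preserving bijection to be a shift by a nonnegative amount), but it is organized around different devices, and on balance it is more complete than the paper's own sketch. The paper anchors on the unique degree-$2$ vertex $(0,1)$: it shows its preimage must be some $(b,1)$ with $b\geq 0$, rules out $(b,0)\mapsto(-1,1)$ via two cases that each force two vertices onto a single image point, and then waves at ``similar case analyses'' for the remaining steps. You instead anchor on the pendant $(1,1)$, define $n$ from its preimage $(n+1,1)$, and run two explicit inductions; your extra lemma that vertices lying on $4$-cycles map to vertices lying on $4$-cycles, so that $C=\{(m,x):m\leq 0\}$ satisfies $C\beta\subseteq C$, is a device the paper does not use, and it gives a clean way to kill the wrong branch at $k=0$ (the forced leftward march lands $(0,0)\beta=(n+2,0)\notin C$). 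Two bookkeeping slips should be repaired in a final write-up, though neither is a real gap. First, the leftward induction must be stated for all $m\leq n$, not $m\leq 0$: as written, the strip $1\leq m\leq n$ is covered by neither induction, and the ``agreement of $\beta$ with $\alpha^n$ on $\{(m,x):m\geq 0\}$'' invoked in your second tie-breaking clause is not yet available when those ties arise. Second, at the $j$-th leftward step with $j\geq 2$ the offending alternative does not produce a collision at $(0,1)$: it forces one of the two undetermined neighbours onto $(-j+1,1)$, whose preimage $(n-j+1,1)$ was already fixed at the previous step, so the contradiction is an injectivity collision at $(-j+1,1)$ rather than at $(0,1)$ specifically (your stated mechanism, applied to the previously determined top-row vertex). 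With those adjustments the two inductions determine $v\beta=v\alpha^n$ for every vertex, exactly as you assert, and the identification $\Bi(\G)=\{\alpha^n:n\in\mb{N}_0\}\cong(\mb{N}_0,+)$ follows.
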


\begin{proof}[Proof of claim]
We show that the only bijective maps on vertices that preserve edges are of the form $\beta:\G\to\G$ such that $(b,x)\beta = (b-n,x)$; this is proved using a case analysis. First of all, there is no bimorphism $\alpha$ sending some vertex $(a,0)$ to $(0,1)$; this is because $\deg(a,0) > \deg(0,1)$ and every bimorphism of $\G$ preserves edges. Similarly, no bimorphism sends $(b,1)$ to $(0,1)$ for $b < 0$. So suppose that $\beta\in \Bi(\G)$ maps some $(b,1)$ to $(0,1)$ for $b \geq 0$. 

It follows that $\beta$ must send $(b,0)$ to either $(1,-1)$ or $(0,0)$ in order to preserve the adjacency $(b,0)\sim (b,1)$. Suppose that $(b,0)\beta = (-1,1)$. This gives rise to two cases:

\textbf{Case 1:} This is where $(b+1,0)\beta = (-2,1)$ and $(b-1,0)\beta = (-1,0)$. We consider the image point of $(b-2,0)$ under $\beta$. There are two choices; either $(b-2,0)\beta = (-2,0)$ or $(b-2,0)\beta = (0,0)$. Suppose initially that $(b-2,0)\beta = (-2,0)$. Since $\beta$ preserves edges, the only potential image point for the two vertices $(b-3,0)$ and $(b-2,1)$ (both adjacent to $(b-2,0)$) is $(-3,0)$; as $\beta$ is a bijection, this cannot happen. So now suppose that $(b-2,0)\beta = (0,0)$, the other potential choice. This means that $(1,0)$ is the only potential image point for both $(b-3,0)$ and $(b-2,1)$; another contradiction. Therefore, this case cannot occur.

\textbf{Case 2:} On the other hand, this is where $(b-1,0)\beta = (-2,1)$ and $(b+1,0)\beta = (-1,0)$. In this case, we consider the image point of $(b+2,0)$ under $\beta$; as above, this is either $(-2,0)$ or $(0,0)$. Assume that $(b+2,0)\beta = (-2,0)$. Here, this would leave $(-3,0)$ as the only potential image point for both vertices $(b+3,0)$ and $(b+2,1)$; this is a contradiction as $\beta$ is a bijection. So $(b+2,0)\beta = (0,0)$, which means that $(1,0)$ is the only potential image point for both $(b+3,0)$ and $(b+2,1)$. Therefore, this case also cannot occur; so it follows that $(b,0)\beta$ cannot be $(-1,1)$.

So $(b,0)\beta = (0,0)$ and hence $\beta$ maps the edge $(b,0)\sim (b,1)$ to the edge $(0,0)\sim (0,1)$. We can use another, similar case analysis to show that $(b+1,0)\beta = (1,0)$. Finally, we can show that $\beta$ cannot send $(a,0)$ to $(b,1)$ for $b < 0$ by another similar argument; this implies that $\beta$ must preserve the infinite two way line $\{(a,0)\; : \;a\in \mb{Z}\}$. All of these results together imply that $\beta$ must be a shift map and so the only bijective homomorphisms are of the form $(b,x)\beta = (b-n,x)$, for $n\geq 0$. It is not hard to see that $\alpha^n = \beta$ and thus Bi$(\G)$ has a single generator. Since any infinite semigroup that is generated by one element is isomorphic to $(\mb{N},+)$ (see \cite{howie1995fundamentals}), it follows that $\Bi(\G)\cong (\mb{N},+)$.

\end{proof}

You can notice that this graph $\Gamma$ contains an undirected version the Cayley graph for $(\mb{Z},+)$ with respect to the group generating set $A = \{1\}$ (the `bottom layer' of \autoref{3fig}) and some way of identifying an image of $(\mathbb{N}_0,+)$ in $(\mb{Z},+)$ in this graph (the one way line on the `top layer' of \autoref{3fig}). The undirected version of the Cayley graph of $(\mb{Z},+)$ provides the extra vertices to move the vertices corresponding to $(\mathbb{N}_0,+)$ around. Any non-trivial bimorphism of $\Gamma$ adds in edges on the `top layer'. Finally, $\Aut(\G)$ is trivial, which corresponds to the fact that the group of units of $(\mb{N},+)$ is trivial.
\end{example}

Following \autoref{urexample}, the rough steps to prove \autoref{maintheorem} are as follows:

\begin{enumerate}[(A)]
\item define a graph $\Gamma$ based on the Cayley graph for a group $G$ that $M$ embeds in (with respect to some generating set $A$ of $G$) as a `bottom layer';
\item show that this graph $\Gamma$ has no non-trivial bimorphisms, so $\Bi(\Gamma) = \Aut(\Gamma) = G$;
\item define a second graph $\Gamma^*$ that extends $\Gamma$ by adding a `top layer' of vertices corresponding to group elements of $G$ above $\Gamma$, while identifying an image of $M$ in this `second layer';
\item show that this graph $\Gamma^*$ is such that $\Bi(\Gamma^*)\cong M$, completing the proof.
\end{enumerate}

\section{Proof of \autoref{maintheorem}}

\subsection*{Bottom layer $\G$ [(A) and (B)]}

The first construction we give illustrate the gadgets required to define a graph with given automorphism group. Here, we use de Groot's inductive demonstration that for every cardinal $\vb{m}$, there exist $\vb{m}$ many non-isomorphic, rigid graphs \cite{de1959groups}.

\begin{construction}[\cite{de1959groups}]\label{brigid}
Step $0$: Suppose that $\{p,q\}$ is an isomorphic copy of $K_2$.

Step $1$: Let $\vb{m}>0$ be any cardinal. Add in $\vb{m}$ new vertices $\{q_{\alpha_1}\stc \alpha_1\in \vb{m}\}$, and draw each edge $\{q,q_{\alpha_1}\}$; here, the $1$ represents the step of the construction.

Step $n$: Assume that $n-1$ steps of this construction have been completed; that is, for all $1 < i < n$:
\begin{itemize}
\item there are cardinals $\vb{m}_{\alpha_1\alpha_2\ldots\alpha_i}$ such that if $\alpha_k\neq \beta_k$ for some $1\leq k \leq i$ then \[\vb{m}_{\alpha_1\alpha_2\ldots\alpha_i} \neq \vb{m}_{\beta_1\beta_2\ldots\beta_i},\] and where $\alpha_i\in\vb{m}_{\alpha_1\alpha_2\ldots\alpha_{i-1}}$. 
\item the vertices $q_{\alpha_1\alpha_2\ldots\alpha_i}$ and edges $\{q_{\alpha_1\alpha_2\ldots\alpha{i-1}},q_{\alpha_1\alpha_2\ldots\alpha_i}\}$ exist. Note that the vertices $q_{\alpha_1\alpha_2\ldots\alpha_{n-1}}$ exist and have degree $1$.
\end{itemize}

Now, for all $\alpha_1\alpha_2\ldots\alpha_{n-1}$ where $(\alpha_1,\alpha_2,\ldots,\alpha_{n-1})\in \vb{m}\times\vb{m}_{\alpha_1}\times \ldots \times \vb{m}_{\alpha_1\alpha_2\ldots\alpha_{n-2}}$:
\begin{itemize}
\item select a cardinal $\vb{m}_{\alpha_1\alpha_2\ldots\alpha_{n-1}}$ such that such that if $\alpha_k\neq \beta_k$ for some $1\leq k \leq i$ then \[\vb{m}_{\alpha_1\alpha_2\ldots\alpha_i} \neq \vb{m}_{\beta_1\beta_2\ldots\beta_i}\] noting also that $\vb{m}_{\alpha_1\alpha_2\ldots\alpha_{n-1}}\neq \vb{m}$ for all such sequences $\alpha_1\alpha_2\ldots\alpha_{n-1}$;
\item add in $\vb{m}_{\alpha_1\alpha_2\ldots\alpha_{n-1}}$ vertices $\{q_{\alpha_1\alpha_2\ldots\alpha_n}\stc \alpha_n\in\vb{m}_{\alpha_1\alpha_2\ldots\alpha_{n-1}}\}$ to the graph;
\item draw in all edges $\{q_{\alpha_1\alpha_2\ldots\alpha_{n-1}},q_{\alpha_1\alpha_2\ldots\alpha_n}\}$.
\end{itemize}
\end{construction}

An illustration is given in \autoref{4fig}. This process completes after countably many steps, and the resulting graph $\mc{G}$ has trivial automorphism group. By varying the choice of cardinals at each step, we can conclude that for every cardinal $\vb{m}$ there exists $\vb{m}$ many non-isomorphic graphs with trivial automorphism group. 

\begin{center}
\begin{figure}[h]
\centering
\begin{tikzpicture}[node distance=2cm,inner sep=0.7mm,scale=1.5]
\node (-3b) at (-3.5,-0.75) [circle,draw,label=below:$p$] {};
\node (-2b) at (-2.5,-0.5) [circle,draw,label=below:$q$] {};

\node (-1b) at (-3.5,0.5) [circle,draw] {};
\node (0b) at (-2.5,0.5) [circle,draw] {};
\node (1b) at (-1.5,0.5) [circle,draw] {};
\node (3b) at (-0.5,0.5) {};
\node (4b) at (-3.9,0.4) {$\mathbf{m}$};
\draw [thick,loosely dotted] (1b) -- (3b);
\draw [dotted] (-3.75,0.4) -- (-0.35,0.4) -- (-0.35,0.6) -- (-3.75,0.6) -- (-3.75,0.4);

\draw (-3b) -- (-2b);
\draw (-2b) -- (-1b);
\draw (-2b) -- (0b);
\draw (-2b) -- (1b);

\node (-1c) at (-5,1.5) [circle,draw] {};
\node (0c) at (-4.625,1.5) [circle,draw] {};
\node (1c) at (-4.5,1.5) {};
\node (2c) at (-4,1.5) [] {};
\node (3c) at (-5.4,1.4) {$\mathbf{m}_{\alpha_1}$};
\draw [thick,loosely dotted] (1c) -- (2c);
\draw [dotted] (-5.15,1.4) -- (-3.95,1.4) -- (-3.95,1.6) -- (-5.15,1.6) -- (-5.15,1.4);

\node (-1d) at (-3,1.5) [circle,draw] {};
\node (0d) at (-2.625,1.5) [circle,draw] {};
\node (1d) at (-2.5,1.5) {};
\node (2d) at (-2,1.5) [] {};
\node (3d) at (-3.4,1.4) {$\mathbf{m}_{\beta_1}$};
\draw [thick,loosely dotted] (1d) -- (2d);
\draw [dotted] (-3.15,1.4) -- (-1.95,1.4) -- (-1.95,1.6) -- (-3.15,1.6) -- (-3.15,1.4);

\node (-1e) at (-1,1.5) [circle,draw] {};
\node (0e) at (-0.625,1.5) [circle,draw] {};
\node (1e) at (-0.5,1.5) {};
\node (2e) at (0,1.5) [] {};
\node (3e) at (-1.4,1.4) {$\mathbf{m}_{\gamma_1}$};
\draw [thick,loosely dotted] (1e) -- (2e);
\draw [dotted] (-1.15,1.4) -- (0.05,1.4) -- (0.05,1.6) -- (-1.15,1.6) -- (-1.15,1.4);

\draw [thick, loosely dotted] (0.5,1.5) -- (1.25,1.5);
\draw (-1b) -- (-1c);
\draw (-1b) -- (0c);
\draw (0b) -- (-1d);
\draw (0b) -- (0d);
\draw (1b) -- (-1e);
\draw (1b) -- (0e);

\node (-1f) at (-6.5,2.5) [circle,draw] {};
\node (0f) at (-6.25,2.5) [circle,draw] {};
\node (2f) at (-5.75,2.5) [] {};
\node (3f) at (-6.9,2.7) {$\mathbf{m}_{\alpha_1\alpha_2}$};
\draw [thick,loosely dotted] (0f) -- (2f);
\draw [dotted] (-6.65,2.4) -- (-5.75,2.4) -- (-5.75,2.6) -- (-6.65,2.6) -- (-6.65,2.4);

\node (-1g) at (-5.35,2.5) [circle,draw] {};
\node (0g) at (-5.1,2.5) [circle,draw] {};
\node (2g) at (-4.6,2.5) [] {};
\node (3g) at (-5.7,2.7) {$\mathbf{m}_{\alpha_1\beta_2}$};
\draw [thick,loosely dotted] (0g) -- (2g);
\draw [dotted] (-5.45,2.4) -- (-4.55,2.4) -- (-4.55,2.6) -- (-5.45,2.6) -- (-5.45,2.4);

\draw (-1c) -- (-1f);
\draw (-1c) -- (0f);
\draw (0c) -- (-1g);
\draw (0c) -- (0g);
\draw [thick, loosely dotted] (-4.4,2.5) -- (-3.9,2.5);

\node (-1h) at (-3.5,2.5) [circle,draw] {};
\node (0h) at (-3.25,2.5) [circle,draw] {};
\node (2h) at (-2.75,2.5) [] {};
\node (3h) at (-3.9,2.7) {$\mathbf{m}_{\beta_1\alpha_2}$};
\draw [thick,loosely dotted] (0h) -- (2h);
\draw [dotted] (-3.65,2.4) -- (-2.75,2.4) -- (-2.75,2.6) -- (-3.65,2.6) -- (-3.65,2.4);

\node (-1i) at (-2.35,2.5) [circle,draw] {};
\node (0i) at (-2.1,2.5) [circle,draw] {};
\node (2i) at (-1.6,2.5) [] {};
\node (3i) at (-2.7,2.7) {$\mathbf{m}_{\beta_1\beta_2}$};
\draw [thick,loosely dotted] (0i) -- (2i);
\draw [dotted] (-2.45,2.4) -- (-1.55,2.4) -- (-1.55,2.6) -- (-2.45,2.6) -- (-2.45,2.4);

\draw (-1d) -- (-1h);
\draw (-1d) -- (0h);
\draw (0d) -- (-1i);
\draw (0d) -- (0i);
\draw [thick, loosely dotted] (-1.4,2.5) -- (-0.9,2.5);

\node (-1j) at (-0.5,2.5) [circle,draw] {};
\node (0j) at (-0.25,2.5) [circle,draw] {};
\node (2j) at (0.25,2.5) [] {};
\node (3j) at (-0.9,2.7) {$\mathbf{m}_{\gamma_1\alpha_2}$};
\draw [thick,loosely dotted] (0j) -- (2j);
\draw [dotted] (-0.65,2.4) -- (0.25,2.4) -- (0.25,2.6) -- (-0.65,2.6) -- (-0.65,2.4);

\node (-1k) at (0.65,2.5) [circle,draw] {};
\node (0k) at (0.9,2.5) [circle,draw] {};
\node (2k) at (1.4,2.5) [] {};
\node (3k) at (0.3,2.7) {$\mathbf{m}_{\gamma_1\beta_2}$};
\draw [thick,loosely dotted] (0k) -- (2k);
\draw [dotted] (0.55,2.4) -- (1.45,2.4) -- (1.45,2.6) -- (0.55,2.6) -- (0.55,2.4);

\draw (-1e) -- (-1j);
\draw (-1e) -- (0j);
\draw (0e) -- (-1k);
\draw (0e) -- (0k);
\draw [thick, loosely dotted] (1.6,2.5) -- (2.1,2.5);

\draw [thick, loosely dotted] (2.5,2.5) -- (3.25,2.5);

\draw [thick, loosely dotted] (-6.25,2.75) -- (-6.25,3.25);
\draw [thick, loosely dotted] (-5.1,2.75) -- (-5.1,3.25);
\draw [thick, loosely dotted] (-3.25,2.75) -- (-3.25,3.25);
\draw [thick, loosely dotted] (-2.1,2.75) -- (-2.1,3.25);
\draw [thick, loosely dotted] (-0.25,2.75) -- (-0.25,3.25);
\draw [thick, loosely dotted] (0.9,2.75) -- (0.9,3.25);
\end{tikzpicture}
\caption{Construction of the tree $\mc{G}$ in \autoref{brigid}.}\label{4fig}
\end{figure}
\end{center}
\vspace{-0.8cm}

Note that the number of steps in this construction cannot be finite; as if so, then the edges $\{q_{\alpha_1\alpha_2\ldots\alpha_{n-1}},q_{\alpha_1\alpha_2\ldots\alpha_n}\}$ with root $q_{\alpha_1\alpha_2\ldots\alpha_{n-1}}$ can be permuted by the symmetric group on $\vb{m}_{\alpha_1\alpha_2\ldots\alpha_{n-1}}$ elements. It follows from this that $p$ is the only vertex with degree $1$. Note also that $\mc{G}$ is a tree; there are no cycles induced at any point in the construction. 

We need to show that these rigid gadgets are indeed suitable for our uses by showing that they are \emph{bimorphism-rigid}; that is, the only bimorphism of the graph is the identity mapping. Recall (from \cite{coleman2019permutation}, for example) that two graphs $\mc{G}, \mc{H}$ are \emph{bimorphism equivalent} if there exist bijective homomorphisms $\alpha:\mc{G}\to\mc{H}$ and $\beta:\mc{H}\to\mc{G}$.

\begin{lemma}\label{nonbieq}
\begin{enumerate}[(a)]
\item Any two graphs $\mc{G}, \mc{H}$ obtained from \autoref{brigid} are bimorphism equivalent if and only if they are the same graph. As a corollary, every graph $\mc{G}$ obtained from \autoref{brigid} is bimorphism-rigid.

\item If $|\mc{G}|\neq |\mc{H}|$, then there is no pair of mutual monomorphisms $\alpha:\mc{G}\to \mc{H}$ and $\beta:\mc{H}\to\mc{G}$.
\end{enumerate}

\end{lemma}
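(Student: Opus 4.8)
The plan is to isolate one structural fact and let everything follow from it: each graph produced by \autoref{brigid} is a connected tree, and \emph{every bijective homomorphism between connected trees is an isomorphism}. Combined with de Groot's statement that these graphs have trivial automorphism group, parts (a) and (b) then come out with very little extra work.

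First I would prove the key claim. Let $\beta:T_1\to T_2$ be a bijective homomorphism between connected trees, and suppose for contradiction that $\beta$ turns some non-edge $\{x,y\}$ of $T_1$ into an edge of $T_2$. Take the path $x=w_0,w_1,\dots,w_k=y$ in $T_1$; since $\{x,y\}$ is a non-edge, $k\ge 2$ and the $w_i$ are pairwise distinct. Applying $\beta$ gives a closed walk $w_0\beta,w_1\beta,\dots,w_k\beta,w_0\beta$ in $T_2$ of positive length $k+1$. I would then use the elementary fact that a positive-length closed walk in a tree must backtrack: reading off the $T_2$-distance from $w_0\beta$ along the walk yields a sequence that starts and ends at $0$ and changes by $\pm 1$ at each step, so it attains a strict interior maximum at some index $i$, at which the two walk-neighbours are forced to coincide (each being the unique neighbour of that vertex lying closer to $w_0\beta$). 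Tracking indices, this gives $w_{i-1}\beta=w_{i+1}\beta$ for an interior $i$, or the wrap-around case $w_{k-1}\beta=w_0\beta$; by injectivity of $\beta$ this forces $w_{i-1}=w_{i+1}$ or $w_{k-1}=w_0$, each contradicting distinctness of the $w_j$ (the latter using $k\ge 2$). So $\beta$ adds no edges, hence preserves and reflects edges, hence is an isomorphism.

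Now the bookkeeping. Any $\mc{G}$ from \autoref{brigid} is a connected tree with trivial automorphism group — the latter is de Groot's result, and is also immediate, since an automorphism must fix $p$ (the unique degree-$1$ vertex) and hence its unique neighbour $q$, and then fixes everything by induction on distance from $p$, because automorphisms preserve degree while the children of an already-fixed vertex have pairwise distinct numbers of children. For (a): by the claim, every bijective endomorphism of $\mc{G}$ is an automorphism, hence trivial, which is the bimorphism-rigidity corollary; and for two \autoref{brigid} graphs $\mc{G},\mc{H}$, if they are bimorphism equivalent then in particular there is a bijective homomorphism $\mc{G}\to\mc{H}$, which by the claim is an isomorphism, so $\mc{G}\cong\mc{H}$, while conversely $\mc{G}\cong\mc{H}$ trivially makes them bimorphism equivalent. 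For (b): a monomorphism is in particular an injection on vertices, so mutual monomorphisms $\mc{G}\to\mc{H}$ and $\mc{H}\to\mc{G}$ give $|\mc{G}|\le|\mc{H}|$ and $|\mc{H}|\le|\mc{G}|$, whence $|\mc{G}|=|\mc{H}|$ by Cantor--Schr\"{o}der--Bernstein, contradicting $|\mc{G}|\ne|\mc{H}|$.

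The one place requiring care is inside the key claim: the backtracking fact for closed walks in trees and, above all, the index bookkeeping around the wrap-around step, so that one really does contradict injectivity of $\beta$ rather than produce a triviality. Everything else — that \autoref{brigid} yields connected trees with trivial automorphism group, and the Cantor--Schr\"{o}der--Bernstein step — is routine or already established in the text preceding the lemma.
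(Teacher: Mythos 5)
Your proof is correct and takes essentially the same route as the paper: both reduce (a) to the fact that a bijective homomorphism between trees cannot create an edge (you establish this via the closed-walk backtracking argument, while the paper simply appeals to maximal acyclicity/creation of a cycle), and both deduce (b) from Cantor--Schr\"{o}der--Bernstein. The only point to add is the bridge the paper states explicitly: graphs produced by \autoref{brigid} are isomorphic if and only if they are literally the same graph (because the cardinals are chosen pairwise distinct), which is what upgrades your conclusion $\mc{G}\cong\mc{H}$ to the statement's ``the same graph''.
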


\begin{proof}
(a) This proof relies on the fact that $\mc{G}$ and $\mc{H}$ are trees. Since $\mc{H}$ is maximally acyclic, any bijective homomorphism from $\mc{G}$ to $\mc{H}$ that adds an edge must induce a cycle somewhere in $\mc{H}$; a contradiction as $\mc{H}$ is a tree. Therefore, every bijective homomorphism from $\mc{G}$ to $\mc{H}$ must preserve non-edges, and hence must be an isomorphism. Since $\mc{G}$ and $\mc{H}$ are isomorphic if and only if $\mc{G} = \mc{H}$, we are done. The corollary follows immediately upon setting $\mc{G} = \mc{H}$ in the above proof.

(b) Follows from the Cantor-Schroeder-Bernstein theorem.
\end{proof}

\begin{remark}
As a corollary, note that $\Bi(\mc{G}) = \Aut(\mc{G})$ for any tree $\mc{G}$.
\end{remark}

Next, we use de Groot's construction to generate a Cayley graph $\Gamma$ of $G$ such that $\Aut(\Gamma) = \Bi(\Gamma) \cong G$, by replacing each of the usually labelled edges using bimorphism-rigid graphs from \autoref{brigid}.

\begin{construction}[\cite{de1959groups}]\label{cayley}
Let $G$ be a group with $|G|>3$, and take $A = G\setminus\{e\}$; this is certainly a generating set for $G$.

For each $a\in A$, use \autoref{brigid} to construct a graph $R_a$ where each cardinal involved is strictly greater than $|G|$. Using the work above, we can also ensure that $R_a\cong R_b$ if and only if $a = b$, and that $|R_a| \neq |R_b|$ for all $a\neq b\in A$. Denote a typical vertex of $R_a$ by $v_a$. The vertices $p_a,q_a\in R_a$ respectively correspond to the specially named vertices $p,q$ in \autoref{brigid}.

Let us now define the vertex set of $\Gamma$ by:
\[V\Gamma = G\cup \bigcup_{a\in A}\left\{G\times \left(VR_a\cup \{a\}\right)\right\}\]
so vertices of $\Gamma$ are group elements $g$, ordered pairs $(g,a)$, or ordered pairs $(g,v_a)$.

The edges of $G$ are defined in two parts for ease of reading. The first part $E_1$ consists of the edges that make up the undirected Cayley graph of $G$ with respect to $A$, and contains machinery to attach the gadget $R_a$ to any instance of $a$ in the graph:
\[E_1(\Gamma) = \left\{\{g,(g,a)\}, \{(g,a), (g,p_a)\}, \{(g,p_a),ga\}\stc g\in G, a\in A,p_a\in R_a\right\}\]
The second set of edges code in the edges of $R_a$ for all $a\in A$:
\[E_2(\Gamma) = \bigcup_{a\in A}\left\{\{(g,v_a),(g,w_a)\}\stc \{v_a,w_a\}\in E(R_a)\right\}\]
Then, define $E\Gamma = E_1(\Gamma) \cup E_2(\Gamma)$. Call $\Gamma = (V\Gamma, E\Gamma)$ the \emph{modified Cayley graph of $G$ with respect to $A$}; since $A$ is fixed throughout the rest of the article, we will refer to this as the \emph{modified Cayley graph of $G$}.
\end{construction}

\begin{remark}
The assumption that $|G| >3$ is to make sure that a degree argument in the proof of \autoref{biautG} is sound. The case where $|G| \leq 2$ (or indeed where $G$ is finite) can be handled by Frucht's theorem, and the fact that any finite group-embeddable monoid is a group.
\end{remark}

\begin{notation}
Denote the induced subgraph of $\Gamma$ isomorphic to $R_a$ associated with $g$ by $(g,R_a)$, and write \[R(g) = \{(g,R_a)\stc a\in A\}.\]
\end{notation}

\begin{center}
\begin{figure}[h]
\centering
\begin{tikzpicture}[node distance=2cm,inner sep=0.7mm,scale=2.5]
\node (-a) at (-1.2,0.1) {};
\node (4a) at (-1.2,-0.1) {};
\node (-b) at (2.2,0.1) {};
\node (4b) at (2.2,-0.1) {};
\node (-5a) at (-1.4,0.2) {};
\node (5a) at (-1.4,-0.2) {};
\node (-5b) at (2.4,0.2) {};
\node (5b) at (2.4,-0.2) {};
\node (-3a) at (-1,0) [circle,draw] {};
\node (-2a) at (0,0.5) [circle,draw] {};
\node (-1a) at (1,0.5) [circle,draw,fill=red!20!white] {};
\node (0a) at (2,0) [circle,draw] {};
\node (1a) at (0,-0.5) [circle,draw,fill=blue!20!white] {};
\node (2a) at (1,-0.5) [circle,draw] {};

\node (g) at (-1,-0.15) {$g$};
\node (ga) at (2,-0.15) {$ga$};
\node (gaa) at (-0.05,0.65) {\begin{small}$(g,a)$\end{small}};
\node (gaia) at (1.05,-0.65) {\begin{small}$(ga,a^{-1})$\end{small}};
\node (gaan) at (0.15,-0.35) {\begin{small}$(ga,p_{a^{-1}})$\end{small}};
\node (gaian) at (0.9,0.35) {\begin{small}$(g,p_a)$\end{small}};
\node (gaan) at (-0.85,-0.65) {\begin{small}$(ga,q_{a^{-1}})$\end{small}};
\node (gaian) at (1.75,0.65) {\begin{small}$(g,q_a)$\end{small}};

\draw (-3a) -- (-a);
\draw (-3a) -- (4a);
\draw (0a) -- (-b);
\draw (0a) -- (4b);
\draw (-3a) -- (-2a);
\draw (-2a) -- (-1a);
\draw (-1a) -- (0a);
\draw (0a) -- (2a);
\draw (2a) -- (1a);
\draw (1a) -- (-3a);
\draw[thick, loosely dotted] (-5a) -- (-a);
\draw[thick, loosely dotted] (5a) -- (4a);
\draw[thick, loosely dotted] (-5b) -- (-b);
\draw[thick, loosely dotted] (5b) -- (4b);

\node (gq) at (1.5,0.75) [circle,draw,fill=red!20!white] {};
\node (-1) at (1,1) [circle,draw,fill=red!20!white] {};
\node (-1ex) at (0.6,1.2) {};
\node (0) at (1.5,1) [circle,draw,fill=red!20!white] {};
\node (0ex) at (1.5,1.3) {};
\node (1) at (2,1) [circle,draw,fill=red!20!white] {};
\node (1ex) at (2.4,1.2) {};
\node (3) at (2.5,1) {};
\draw [thick,loosely dotted] (gq) -- (3);
\draw [thick,loosely dotted] (-1) -- (-1ex);
\draw [thick,loosely dotted] (0) -- (0ex);
\draw [thick,loosely dotted] (1) -- (1ex);

\node (geq) at (-0.5,-0.75) [circle,draw,fill=blue!20!white] {};
\node (-1e) at (-1,-1) [circle,draw,fill=blue!20!white] {};
\node (-1eex) at (-1.4,-1.2) {};
\node (0e) at (-0.5,-1) [circle,draw,fill=blue!20!white] {};
\node (0eex) at (-0.5,-1.3) {};
\node (1e) at (0,-1) [circle,draw,fill=blue!20!white] {};
\node (1eex) at (0.4,-1.2) {};
\node (3e) at (-1.5,-1) {};
\draw [thick,loosely dotted] (geq) -- (3e);
\draw [thick,loosely dotted] (-1e) -- (-1eex);
\draw [thick,loosely dotted] (0e) -- (0eex);
\draw [thick,loosely dotted] (1e) -- (1eex);

\draw (-1a) -- (gq);
\draw (-1) -- (gq);
\draw (0) -- (gq);
\draw (1) -- (gq);

\draw (1a) -- (geq);
\draw (-1e) -- (geq);
\draw (0e) -- (geq);
\draw (1e) -- (geq);

\end{tikzpicture}
\caption{A snippet of the graph $\G$ constructed in \autoref{cayley}. Vertices in $(g,R_a)$ are highlighted in red; vertices in $(ga,R_{a^{-1}})$ are highlighted in blue.}\label{5fig}
\end{figure}
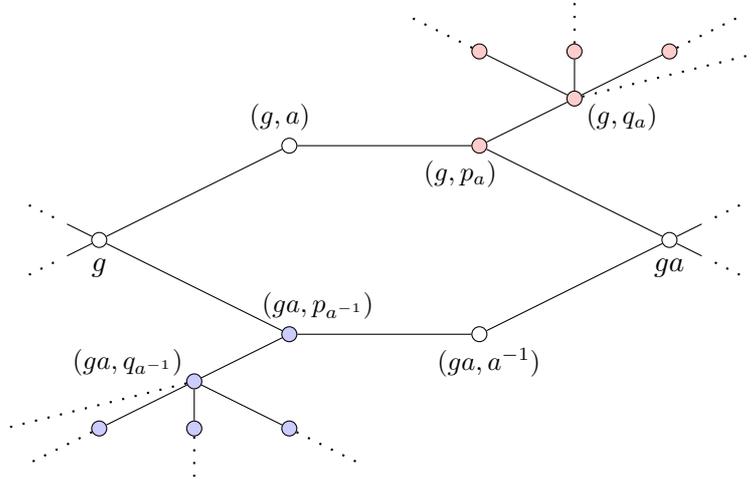
\end{center}
\vspace{-0.8cm}

\begin{lemma}\label{holefiller}
Let $\beta$ be a bimorphism of $\Gamma$. Then $(g,R_a)\beta = (g\beta,R_a)$ for all $a\in A$ and $g\in G$. 
\end{lemma}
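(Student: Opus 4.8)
\emph{Proof proposal.}

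\textbf{Strategy.} The plan is to show that $\beta$ preserves the three ``layers'' of $\Gamma$ --- the group vertices $G$, the subdivision vertices $\{(g,a),(g,p_a)\stc g\in G,\,a\in A\}$, and the gadget interiors --- and then to exploit the rigidity of the gadgets to pin down which gadget is sent to which. Throughout I would use two elementary consequences of $\beta$ being a bijective endomorphism: since $N(v)\beta\subseteq N(v\beta)$ and $\beta$ is injective, $\deg(v\beta)\geq\deg(v)$ for every vertex $v$; and $\beta$ carries cycles to cycles (hence, being injective, $k$-cycles to $k$-cycles). As $M$ is infinite we may take $G$ infinite (the finite case being Frucht's theorem), so that in $\Gamma$ the group vertices have degree $|G|$, the vertices $(g,a)$ and $(g,p_a)$ have degrees $2$ and $3$, and every other gadget vertex has degree $1+\vb{m}>|G|$ for one of the cardinals $\vb{m}$ appearing in \autoref{brigid}.

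\textbf{Separating the gadget interiors and the group vertices.} Let $D$ be the set of gadget vertices other than the $(g,p_a)$'s; equivalently, $D$ is the set of vertices of degree strictly greater than $|G|$, and also the set of vertices of $\Gamma$ lying on no cycle (each sits in a pendant subtree $(g,R_a)\setminus\{(g,p_a)\}$ that is joined to the rest of $\Gamma$ only by the edge $\{(g,p_a),(g,q_a)\}$). From $\deg(v\beta)\geq\deg(v)$ we get $D\beta\subseteq D$; from cycle-preservation, $(V\Gamma\setminus D)\beta\subseteq V\Gamma\setminus D$; since $\beta$ is a bijection, both inclusions are equalities. Writing $C=V\Gamma\setminus D=G\cup\{(g,a),(g,p_a)\}$, the restriction $\beta|_C$ is a bijective endomorphism of the induced subgraph $\Gamma[C]$, in which group vertices still have infinite degree while all other vertices have degree $2$; hence $G\beta\subseteq G$. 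I would then upgrade this: if some $(g,a)$ or $(g,p_a)$ were sent to a group vertex $h$, then (back in $\Gamma$) an image of $g$ or of $ga$ --- a group vertex, since $G\beta\subseteq G$ --- would be adjacent to $h$, contradicting the fact that $G$ is an independent set of $\Gamma$; and if some $(g,a)$ were sent to a $(h,p_b)$, an injectivity count on $N((g,p_a))$ against $N((h,p_b))$ fails. So $\beta$ respects all three layers, and chasing adjacencies produces a bijection $\phi=\beta|_G$ of $G$ together with bijections $b_g\colon A\to A$ ($g\in G$) such that $(g,a)\beta=(\phi(g),b_g(a))$, $(g,p_a)\beta=(\phi(g),p_{b_g(a)})$, $(g,q_a)\beta=(\phi(g),q_{b_g(a)})$ and $\phi(ga)=\phi(g)\,b_g(a)$; that $b_g$ is a bijection follows from $b_g(a)=\phi(g)^{-1}\phi(ga)$ and the bijectivity of $\phi$.

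\textbf{Identifying the gadgets.} Since $\beta((g,R_a))$ is connected, contains the images $(\phi(g),q_{b_g(a)})$ and $(\phi(g),p_{b_g(a)})$ of $(g,q_a)$ and $(g,p_a)$, and avoids the only two vertices outside $(\phi(g),R_{b_g(a)})$ that are adjacent to it --- namely the images of $(g,a)$ and of $ga$, neither of which lies in $(g,R_a)$ --- it must be contained in the single gadget $(\phi(g),R_{b_g(a)})$. Because $(g,a)\mapsto(\phi(g),b_g(a))$ is a bijection of $G\times A$ and $\beta$ permutes the set of all gadget vertices, a covering argument forces $\beta((g,R_a))=(\phi(g),R_{b_g(a)})$. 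Hence $\beta|_{(g,R_a)}$ is a bijective homomorphism between the trees $R_a$ and $R_{b_g(a)}$; as in the proof of \autoref{nonbieq}(a), such a map cannot add an edge (that would create a cycle in the acyclic target) and so is an isomorphism, whence $R_a\cong R_{b_g(a)}$. By the choice of the $R_a$ in \autoref{cayley} this forces $b_g(a)=a$, and therefore $\beta((g,R_a))=(\phi(g),R_a)=(g\beta,R_a)$.

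\textbf{Main obstacle.} I expect the layer-separation step to be the hard part. Degree monotonicity alone is too weak, since a low-degree vertex is a priori free to map to a vertex of far higher degree; one genuinely needs to combine the degree bound with cycle-preservation, with the pendant-tree structure of the gadget interiors, and with the fact that $G$ is independent, in order to rule out (for instance) a gadget-interior vertex being hit by a group vertex or a subdivision vertex landing inside a gadget. Once the three layers are known to be respected, the rest is adjacency-chasing together with the tree-rigidity already recorded in \autoref{nonbieq}.
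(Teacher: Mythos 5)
Your argument is correct, but it reaches the conclusion by a genuinely different route from the paper. The paper works locally at a fixed $g$: a terse degree argument sends each $(g,R_b)$ into some $(g\beta,R_c)$, a preimage analysis at the degree-$3$ vertices $(g\beta,p_b)$ shows the induced map $R(g)\to R(g\beta)$ is a bijection, and then the pairwise distinct cardinalities of the $R_a$ force this bijection to preserve the cardinality well-order, hence the indices. You instead prove a global structural statement first: combining degree monotonicity, cycle preservation and the pendant-tree structure of the gadget interiors, you show every bimorphism preserves the three layers, extract the bijections $\phi$ of $G$ and $b_g$ of $A$, use connectivity plus a covering argument to show $\beta$ maps $(g,R_a)$ onto $(\phi(g),R_{b_g(a)})$, and finish with tree rigidity (as in \autoref{nonbieq}(a)) and the pairwise non-isomorphism of the gadgets guaranteed in \autoref{cayley} to force $b_g(a)=a$. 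Your version makes explicit what the paper compresses into ``one can use a degree argument'' (in particular that $g\beta\in G$ and that the gadgets at $g$ land at $g\beta$), and since you establish that the restriction is onto a single gadget, you could equally well finish via the distinct cardinalities $|R_a|$, so only one of the two distinguishing features of the $R_a$ is needed; the paper's cardinality well-order is shorter but leaves more of the layer bookkeeping implicit. Two small points: the step ruling out $(g,a)\beta=(h,p_b)$ is stated loosely --- the actual contradiction is that $(g,p_a)$, which has degree $3$ and a neighbour $(g,q_a)$ in a gadget interior, would be forced onto the degree-$2$ vertex $(h,b)$, so the count should be against $N((h,b))$ rather than $N((h,p_b))$; and your identification of the gadget interiors as exactly the vertices of degree greater than $|G|$ requires $G$ infinite (group vertices have degree $2(|G|-1)$), which you flag and which is harmless for \autoref{maintheorem} since the finite case is Frucht's theorem, though the lemma as stated allows finite $|G|>3$.
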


\begin{proof}
Since each $R_a$ has a different cardinality by construction, both $R(g)$ and $R(g\beta)$ can be well ordered by cardinality of the $R_a$'s. Our claim is that $\beta$ induces a bijection from $R(g)$ to $R(g\beta)$ that preserves this well-order.

Since $g$ is mapped to $g\beta$, one can use a degree argument to show that each $(g,R_b)$ is mapped to some $(g\beta,R_c)$ for some $c\in A$. This must be at least a monomorphism; hence $\beta|_{R(g)}:R(g) \to R(g\beta)$ is a monomorphism.

As $\beta$ is a bimorphism, for every $(g\beta,p_b)$ there exists an $x\in \Gamma$ such that $x\beta = (g\beta,p_b)$. Because of this, the degree of $x$ must be less than or equal to $(g\beta,p_b)$, which is $3$. This means that the only potential preimage for $(g\beta,p_b)$ is either $(k,p_c)$ or $(k,c)$ for some $c$. 

Assume for a contradiction that it is $(k,c)$ for some $k\in G$ and $c\in A$; so that $(k,c)\beta = (g\beta,p_b)$. As $(k,c)$ has degree $2$, it follows that either $k$ or $(k,p_c)$ is mapped to $(g\beta,b)$; this is impossible by a degree argument. So any preimage of $(g\beta,p_b)$ must be $(k,p_c)$ for some $k\in G$ and $c\in A$. Since this happens for all $(g\beta,p_b)$, and each $R_a$ has cardinality greater than that of $G$, it follows that $\beta$ induces a bijection from $R(g)$ to $R(g\beta)$.

Now, to show that it preserves the stated well-order on $R(g)$. Assume for a contradiction that this is not true. Therefore, there is a least element in $R(g)$ (ordered by cardinality) such that $(g,R_a)\beta = (g\beta,R_b)$, where $R_b \neq R_a$. However, since $\beta$ is a bijection there must exist $(g,R_c)$ such that $(g,R_c)\beta = (g\beta,R_a)$; this cannot happen as $|R_c| > |R_a|$ by assumption. Therefore, this bijection must be order-preserving, and $(g,R_a)\beta = (g\beta,R_a)$.
\end{proof}

\begin{corollary}\label{betainside}
Let $\beta\in \Bi(\Gamma)$. Then $(g,a)\beta = (g\beta,a)$ and $(ga)\beta = g\beta a$ for all $g\in G$ and $a\in A$. 
\end{corollary}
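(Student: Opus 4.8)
The plan is to combine \autoref{holefiller} with the tree structure of the gadgets and a single degree comparison. The first step is to locate the images of the distinguished vertices $(g,p_a)$ and $(g,q_a)$. Since \autoref{holefiller} gives $(g,R_a)\beta = (g\beta,R_a)$, the map $\beta$ restricts to a bijection between the vertex sets of two induced copies of $R_a$; as edges of an induced subgraph must map into the target induced subgraph, this restriction is a bijective homomorphism $R_a\to R_a$. Because $R_a$ is a tree, the argument used in \autoref{nonbieq}(a) forces this bijective homomorphism to preserve non-edges, hence to be an isomorphism, and the rigidity of $R_a$ then forces it to fix the gadget pointwise in the sense that $(g,v_a)\beta = (g\beta,v_a)$ for every vertex $v_a$ of $R_a$. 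In particular $(g,p_a)\beta = (g\beta,p_a)$ and $(g,q_a)\beta = (g\beta,q_a)$.

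Next I would analyse the neighbourhood of $(g,p_a)$. In $\Gamma$ this vertex has exactly three neighbours, namely $(g,a)$, the group element $ga$, and its unique $R_a$-neighbour $(g,q_a)$; similarly $(g\beta,p_a)$ has exactly the three neighbours $(g\beta,a)$, $(g\beta)a$ and $(g\beta,q_a)$. Since $\beta$ is a bijection preserving edges with $(g,p_a)\beta = (g\beta,p_a)$, the images of the three neighbours of $(g,p_a)$ are three distinct neighbours of $(g\beta,p_a)$, and therefore all of them. Removing the already-known pairing $(g,q_a)\mapsto(g\beta,q_a)$ leaves $\{(g,a)\beta,(ga)\beta\} = \{(g\beta,a),(g\beta)a\}$.

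It remains to decide which of $(g,a)$ and $ga$ is sent to which vertex, and here a degree count settles it. In $\Gamma$ the vertex $(g\beta,a)$ has degree $2$ (it appears only in the two $E_1$-edges joining it to $g\beta$ and to $(g\beta,p_a)$), whereas the group element $(g\beta)a$ has degree $2|A| = 2(|G|-1) > 2$ because $|G|>3$. A bimorphism never lowers the degree of a vertex, so $\deg\big((ga)\beta\big)\geq \deg(ga) = 2(|G|-1) > 2$; hence $(ga)\beta$ cannot be the degree-$2$ vertex $(g\beta,a)$, which forces $(ga)\beta = (g\beta)a$ and consequently $(g,a)\beta = (g\beta,a)$, as required. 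The only delicate point in the whole argument is the very first step — ensuring that $\beta$ restricts to an honest graph isomorphism on each gadget, not merely a bijective homomorphism — but this is precisely the content of the tree-plus-rigidity combination in \autoref{nonbieq}; everything afterwards is bookkeeping with three neighbours and two degrees.
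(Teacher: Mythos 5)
Your proof is correct and takes essentially the same route as the paper: \autoref{holefiller} (made pointwise via the tree-plus-rigidity argument of \autoref{nonbieq}) pins down $(g,p_a)\beta=(g\beta,p_a)$ and $(g,q_a)\beta=(g\beta,q_a)$, and the three-neighbour structure of $(g,p_a)$ then forces the images of $(g,a)$ and $ga$; the only difference is that the paper identifies $(g,a)\beta$ first, as the unique common neighbour of $g\beta$ and $(g\beta,p_a)$, whereas you identify $(ga)\beta$ first by the degree comparison $2|A|>2$. Your explicit justification that $\beta$ restricts to the identity on each gadget is a detail the paper leaves implicit, but it is the intended reading of \autoref{holefiller}.
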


\begin{proof}
It follows from \autoref{holefiller} that $(g,p_a)\beta = (g\beta,p_a)$. As $g$ is sent to $g\beta$, and $(g\beta,a)$ is the only vertex adjacent to both $g\beta$ and $(g\beta,p_a)$, it follows that $(g,a)\beta = (g\beta,a)$. Since this happens, together with the fact that $(g,q_a)\beta = (g\beta,q_a)$, the only potential image point for $ga$ under $\beta$ must be $g\beta a$; giving $(ga)\beta = g\beta a$ for all $g\in G$ and $a\in A$.
\end{proof}

\begin{proposition}\label{biautG}
Let $G$ be a group with $|G| > 3$, and suppose that $\Gamma$ is the modified Cayley graph of $G$ as defined in \autoref{cayley}. Then $\Bi(\Gamma) = \Aut(\Gamma) \cong G$.
\end{proposition}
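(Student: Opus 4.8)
The plan is to show $\Bi(\Gamma) = \Aut(\Gamma)$ first, and then invoke the standard fact that the modified Cayley graph of $G$ has automorphism group $G$. For the first (and main) step, let $\beta \in \Bi(\Gamma)$; I want to show $\beta$ preserves non-edges, hence is an automorphism. By \autoref{holefiller} and \autoref{betainside}, $\beta$ already acts on the ``skeleton'' vertices compatibly with its restriction $\beta|_G$ to the set $G$ of group-element vertices: we have $(g,a)\beta = (g\beta, a)$, $(g,p_a)\beta = (g\beta,p_a)$, $(ga)\beta = (g\beta)a$, and $(g,R_a)\beta = (g\beta,R_a)$ as an isomorphism of the rigid gadget onto itself (which, by bimorphism-rigidity from \autoref{nonbieq}, must be the identity on each gadget). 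So the only place $\beta$ could possibly add an edge is among the $G$-vertices, or between a $G$-vertex and a skeleton vertex; but the adjacencies of every skeleton vertex are already pinned down by the above, so in fact $\beta$ can only add an edge between two $G$-vertices $g$ and $h$ with $g\beta \sim h\beta$ in $\Gamma$ but $g \not\sim h$.

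Next I would argue that $\beta|_G$ is in fact a bijection of $G$ onto $G$: this follows because $\beta$ is a bijection of $V\Gamma$ and, by the degree/structure analysis underlying \autoref{holefiller} and \autoref{betainside}, a vertex of $\Gamma$ lies in $G$ (as opposed to being a skeleton or gadget vertex) if and only if its image does — the $G$-vertices are exactly those of the appropriate large degree $|A| = |G| - 1$ coming from the $|A|$ attached gadgets, together with incoming edges, and this is where the hypothesis $|G| > 3$ is needed to separate these degrees cleanly from the degrees arising inside the gadgets and from the at-most-$3$ degrees of the $(g,a)$ and $(g,p_a)$ vertices. Now recall that in $\Gamma$ the induced subgraph on $G$ is \emph{edgeless}: in \autoref{cayley} the Cayley structure is encoded only through the length-three paths $g \,\text{--}\, (g,a) \,\text{--}\, (g,p_a) \,\text{--}\, ga$, so no two $G$-vertices are ever adjacent. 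Hence the condition ``$g\beta \sim h\beta$ but $g \not\sim h$'' is vacuous: there are no edges among $G$-vertices to be the image of a non-edge. Therefore $\beta$ adds no edges at all and is an automorphism, giving $\Bi(\Gamma) = \Aut(\Gamma)$.

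It remains to show $\Aut(\Gamma) \cong G$. The map sending $h \in G$ to the permutation $g \mapsto hg$ on $G$, extended by $(g,a) \mapsto (hg,a)$, $(g,v_a) \mapsto (hg,v_a)$, is an edge-preserving bijection (left multiplication is a Cayley-graph automorphism and it respects the gadget attachments), and it is easily checked to be injective with trivial kernel, so $G$ embeds in $\Aut(\Gamma)$. Conversely, given $\varphi \in \Aut(\Gamma)$, by \autoref{betainside} $\varphi$ is determined by $\varphi|_G$, and $\varphi|_G$ is a bijection of $G$ with $(ga)\varphi = (g\varphi)a$ for all $g \in G$, $a \in A$; setting $h = e\varphi$ and using $A = G \setminus \{e\}$ together with the fact that $A$ generates $G$, one gets $g\varphi = hg$ for all $g$, so $\varphi$ is left multiplication by $h$. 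Hence $\Aut(\Gamma) \cong G$, and combining with the previous paragraph completes the proof.

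The main obstacle I expect is the bookkeeping in the second paragraph: verifying that the various vertex types of $\Gamma$ are mutually distinguishable by $\beta$ purely from degree and local structure — in particular confirming exactly where $|G| > 3$ enters — and making fully precise the claim, only sketched above, that a bimorphism of $\Gamma$ cannot add an edge because the only candidate edges (among $G$-vertices) do not exist in the first place. Everything after that, including the identification of $\Aut(\Gamma)$ with $G$, is the routine Frucht-style verification.
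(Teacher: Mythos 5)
Your overall architecture (pin down $\beta$ on the skeleton and gadgets via \autoref{holefiller}, \autoref{betainside} and rigidity from \autoref{nonbieq}, observe that $G$ is an independent set so no edge can be added, then identify $\Aut(\Gamma)$ with $G$ Frucht-style) is reasonable, but there is a genuine gap at exactly the step you flag: the claim that a vertex of $\Gamma$ lies in $G$ if and only if its image does, justified by a degree count. Bimorphisms do not preserve degree; if $x\beta=y$ then distinct neighbours of $x$ go to distinct neighbours of $y$, so all you get is $d(y)\geq d(x)$. Moreover the gadget vertices are the vertices of \emph{largest} degree: in \autoref{cayley} every cardinal used to build $R_a$ is strictly greater than $|G|$, while $d(g)=2|A|=2(|G|-1)$ for $g\in G$ (not $|A|$ as you state). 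Hence a degree comparison cannot rule out a bimorphism sending some $g\in G$ to a high-degree vertex inside some $(h,R_a)$ --- and this is precisely the case that must be excluded before the lemmas can be read as pinning down $\beta$ in the way your first paragraph uses them, since their statements presuppose $g\beta\in G$. The paper closes this gap with a structural argument, not a degree argument: every $g\in G$ lies on a $6$-cycle through $e$, an injective edge-preserving map sends cycles to cycles, and each gadget is a tree attached to the rest of $\Gamma$ only at the cut vertex $(h,p_a)$, so no cycle meets a gadget except possibly at $(h,p_a)$; the two remaining candidate images $(h,a)$ and $(h,p_a)$ have degrees $2$ and $3$, both less than $d(g)=2|A|\geq 6$. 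This is also where $|G|>3$ really enters --- it guarantees $2|A|>3$ --- rather than separating $d(g)$ from the gadget degrees, which is achieved by the choice of cardinals.

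Once that argument (or some substitute) is supplied, the rest of your route does go through and is a mild variant of the paper's: you deduce $\Bi(\Gamma)=\Aut(\Gamma)$ directly from the fact that no non-edge can be mapped to an edge because $G$ induces an edgeless subgraph and all other adjacencies are forced, whereas the paper instead shows every bimorphism coincides with an explicitly constructed left multiplication and exhibits its inverse; your identification of $\Aut(\Gamma)$ with $G$ via $h=e\varphi$ and $A=G\setminus\{e\}$ matches the paper's uniqueness computation using \autoref{betainside}. But as written, the proposal's central step --- that $\beta$ restricts to a bijection of $G$ onto $G$ --- is unsupported, so the proof is incomplete without the cycle/tree argument.
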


\begin{proof}
Let $g,h\in G$. Define a permutation $\alpha:\Gamma\to\Gamma$ as follows:
\begin{itemize}
\item For any $x\in G$, define $x\alpha = hg^{-1}x$;
\item $(x,a)\alpha = (hg^{-1}x,a)$ for all $x\in G$ and $a\in A$;
\item $(x,v_a)\alpha = (hg^{-1}x,v_a)$ for all $x\in G$, $v_a\in R_a$ and $a\in A$.
\end{itemize}

Note that $g\alpha = hg^{-1}g = h$, and $\alpha$ preserves edges; hence $\alpha\in\Bi(\Gamma)$ and acts as left multiplication by $hg^{-1}$ on $G$. The idea is to show that this is the unique bimorphism sending $g$ to $h$; so suppose that $\beta \in\Bi(\Gamma)$ also sends $g$ to $h$. Note that if $g = h$, then $\alpha$ is the (necessarily unique) identity map.

%

Let $x\in G$ and choose $b= g^{-1}x$, which gives $x = gb$. Then, using \autoref{betainside} \[x\beta = (gb)\beta = g\beta b = hb = hg^{-1}x = x\alpha\] so $x\beta = x\alpha$ for all $x\in G$. From this point, it follows that $\beta$ behaves like $\alpha$ on all other vertices of $\Gamma$, and so the two are equal. 

Since $g,h$ were chosen arbitrarily in $G$, there exists a unique bimorphism $\alpha'$ such that $h\alpha' = g$ and $\alpha'$ acts as left multiplication by $gh^{-1}$ on $G$. Therefore, $\alpha' = \alpha^{-1}$ and so each bimorphism defined is an automorphism.

The goal is now to show that there are there are no bimorphisms sending $g\in G$ to any of the other vertices in $\Gamma$. First, note that each $g\in G$ is part of a $6$-cycle given by \[\{g,(g,g^{-1})\}, \{(g,g^{-1}),(g,p_{g^{-1}})\}, \{(g,p_{g^{-1}}),e\}, \{e,(e,g)\}, \{(e,g),(e,p_{g})\}, \{(e,p_g),g\}\] Since this cyclic path must be preserved by a bimorphism, it follows that $g$ cannot be sent to a vertex in the acyclic induced subgraph $(h,R_a)$ for all $a\in A$ and $h\in G$, with the possible exception of $(h,p_a)$.

Note that $d(g) \geq |A| \geq 3$ for all $g\in G$, since $g\sim (g,a)$ and $(h,p_a)\sim g$ for some $h\in G$. Since $d((h,a)) = 2$ and $d((h,p_a)) = 3$ for all $h\in G$ and $a\in A$, it follows that there does not exist a bimorphism sending $g$ to either $(h,a)$ or $(h,p_a)$ for all $h\in G$. Therefore, the only non-trivial bimorphisms send vertices $g\in G$ to other group elements $h\in G$ via left multiplication, at which point the rest of the function is determined by uniqueness of the defined bimorphism $\alpha$. Therefore, $\Bi(\Gamma) = \Aut(\Gamma) \cong G$.
\end{proof}

\subsection*{Top layer graph $\G^*$ [(C) and (D)]}

Let $M$ be a group-embeddable monoid; the aim now is to use $\Gamma$ to build a graph $\G^*$ such that $\Bi(\Gamma^*) \cong M$. From the general strategy, the idea is to add a second layer of vertices corresponding to group elements of $G$ onto $\Gamma$ (as defined in \autoref{cayley}), and identify those vertices in some image of $M$ in $G$. 

\begin{construction}\label{bigraph}
Let $G$ be a group, and suppose that $\Gamma$ is constructed as in \autoref{cayley}. Suppose that $M\leq G$ is a monoid that embeds in the group $G$. Choose $\sigma$ to be such an embedding, and define $B = \textrm{im }\sigma\cong M$. 

Define the vertex set of $\Gamma^*$ to be \[V\Gamma^* = V\Gamma \cup \{(g,\bullet)\; :\; g\in G\}\] essentially adding a layer of vertices `above' elements of the group $G$.

Define the edge set of $\Gamma^*$ to be 
\[E\Gamma^* = E\Gamma \cup \left\{\{g,(g,\bullet)\}\stc g\in G\right\} \cup \left\{\{(a,\bullet),(b,\bullet)\}\stc a\neq b\in B\right\}\]
consisting of
\begin{itemize}
\item all edges from $\Gamma$;
\item edges between corresponding elements of $G\subseteq V\Gamma$ and $G\times \{\bullet\}\subseteq V\Gamma^*$;
\item edges between all pairs of distinct elements of $B\times\{\bullet\}\subseteq V\Gamma^*$.
\end{itemize}
\end{construction}

\begin{center}
\begin{figure}[h]
\centering
\begin{tikzpicture}[node distance=2cm,inner sep=0.7mm,scale=2.5]
\node (-a) at (-1.2,0.1) {};
\node (4a) at (-1.2,-0.1) {};
\node (-b) at (2.2,0.1) {};
\node (4b) at (2.2,-0.1) {};
\node (-5a) at (-1.4,0.2) {};
\node (5a) at (-1.4,-0.2) {};
\node (-5b) at (2.4,0.2) {};
\node (5b) at (2.4,-0.2) {};
\node (-3a) at (-1,0) [circle,draw] {};
\node (-2a) at (0,0.25) [circle,draw] {};
\node (-1a) at (1,0.25) [circle,draw] {};
\node (0a) at (2,0) [circle,draw] {};
\node (1a) at (0,-0.25) [circle,draw] {};
\node (2a) at (1,-0.25) [circle,draw] {};

\node (g) at (-1,-0.15) {$g$};
\node (ga) at (2,-0.15) {$ga$};
\node (gaa) at (-0.05,0.4) {\begin{small}$(g,a)$\end{small}};
\node (gaia) at (1.05,-0.4) {\begin{small}$(ga,a^{-1})$\end{small}};
\node (gaan) at (0.15,-0.1) {\begin{small}$(ga,p_{a^{-1}})$\end{small}};
\node (gaian) at (0.9,0.1) {\begin{small}$(g,p_a)$\end{small}};
\node (gaan) at (-0.85,-0.4) {\begin{small}$(ga,q_{a^{-1}})$\end{small}};
\node (gaian) at (1.75,0.4) {\begin{small}$(g,q_a)$\end{small}};
\node (gb) at (-1,1) [circle,draw] {};
\node (gab) at (2,1) [circle,draw] {};
\draw (-3a) -- (gb);
\draw (0a) -- (gab);
\node (g) at (-1,1.15) {$(g,\bullet)$};
\node (ga) at (2,1.15) {$(ga,\bullet)$};
\draw (gb) -- (gab);
\node (gb1) at (-1.2,1) {};
\node (gb2) at (-1.4,1) {};
\node (gab1) at (2.2,1) {};
\node (gab2) at (2.4,1) {};
\draw (gb) -- (gb1) {};
\draw [thick, loosely dotted] (gb1) -- (gb2);
\draw (gab) -- (gab1) {};
\draw [thick, loosely dotted] (gab1) -- (gab2);

\draw (-3a) -- (-a);
\draw (-3a) -- (4a);
\draw (0a) -- (-b);
\draw (0a) -- (4b);
\draw (-3a) -- (-2a);
\draw (-2a) -- (-1a);
\draw (-1a) -- (0a);
\draw (0a) -- (2a);
\draw (2a) -- (1a);
\draw (1a) -- (-3a);
\draw[thick, loosely dotted] (-5a) -- (-a);
\draw[thick, loosely dotted] (5a) -- (4a);
\draw[thick, loosely dotted] (-5b) -- (-b);
\draw[thick, loosely dotted] (5b) -- (4b);

\node (gq) at (1.5,0.5) [circle,draw] {};
\node (-1) at (1.25,0.625) {};
\node (0) at (1.5,0.75) {};
\node (1) at (1.75,0.625) {};
\draw [thick,loosely dotted] (gq) -- (-1);
\draw [thick,loosely dotted] (gq) -- (0);
\draw [thick,loosely dotted] (gq) -- (1);

\node (geq) at (-0.5,-0.5) [circle,draw] {};
\node (-1e) at (-0.75,-0.625) {};
\node (0e) at (-0.5,-0.75) {};
\node (1e) at (-0.25,-0.625) {};
\draw [thick,loosely dotted] (geq) -- (-1e);
\draw [thick,loosely dotted] (geq) -- (0e);
\draw [thick,loosely dotted] (geq) -- (1e);

\draw (-1a) -- (gq);

\draw (1a) -- (geq);

\end{tikzpicture}
\caption{A snippet of the graph $\G^*$ constructed in \autoref{bigraph}. Since $(g,\bullet)$ and $(ga,\bullet)$ are adjacent, both $g$ and $ga$ are in $B$.}\label{6fig}
\end{figure}
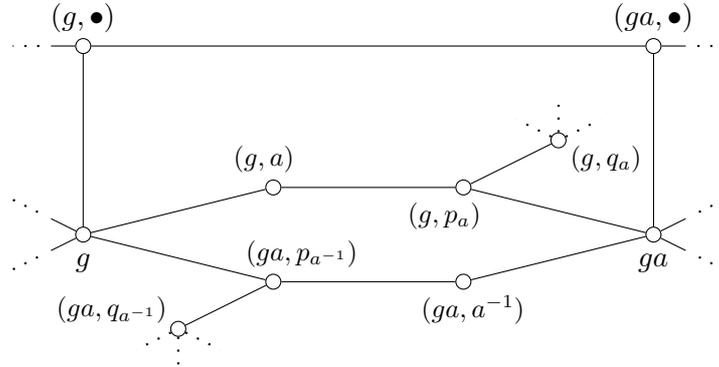
\end{center}
\vspace{-0.8cm}

\begin{lemma}\label{holefilleragain}
Let $\beta$ be a bimorphism of $\Gamma^*$. Then $(g,R_a)\beta = (g\beta,R_a)$ for all $g\in G$ and $a\in A$. 
\end{lemma}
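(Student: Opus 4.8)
The plan is to follow the proof of \autoref{holefiller} almost verbatim; the only structural change from $\Gamma$ to $\Gamma^*$ is the extra layer $\{(g,\bullet) : g\in G\}$, so the work is to check that these new vertices do not interfere. First I would record their degrees: $d((g,\bullet))=1$ when $g\notin B$, and $d((g,\bullet))=|B|=|M|$ when $g\in B$. Since $|M|\le |G|$ and $|G|>3$, and since none of the new edges (the edges $\{g,(g,\bullet)\}$ and the clique edges on $B\times\{\bullet\}$) touch gadget-internal vertices, each induced subgraph $(g,R_a)$ is still a tree and the gadget-internal vertices $(g,q_{\alpha_1\cdots\alpha_i})$ remain exactly the vertices of $\Gamma^*$ of degree strictly greater than $|G|$. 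As in \autoref{holefiller}, a bimorphism $\beta$ therefore sends gadget-internal vertices to gadget-internal vertices and cannot add an edge inside any $(g,R_a)$ without creating a cycle there.

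The genuinely new point is to verify that $g\beta\in G$ for every $g\in G$, so that ``$(g\beta,R_a)$'' even parses. Since $d(g)=2|A|+1=|G|>3$ and a bimorphism cannot decrease the degree of a vertex, $g\beta$ cannot be one of the degree-$1$, $2$, or $3$ vertices $(h,\bullet)$ with $h\notin B$, $(h,a)$, or $(h,p_a)$; and since $g$ lies on a $6$-cycle inherited from $\Gamma$ (one of those exhibited in the proof of \autoref{biautG}) while no gadget-internal vertex lies on a cycle of $\Gamma^*$ --- each $(g,R_a)$ being a tree attached to the rest of the graph only at $(g,p_a)$ --- the image $g\beta$ is not gadget-internal. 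The one remaining possibility is $g\beta=(h,\bullet)$ with $h\in B$. To exclude it, fix $a\in A$ and use that the neighbour $(ga^{-1},p_a)$ of $g$ itself has the gadget-internal neighbour $(ga^{-1},q_a)$, of degree $>|G|$; hence $(ga^{-1},p_a)\beta$ must have a neighbour of degree $>|G|$. But $(ga^{-1},p_a)\beta$ is a neighbour of $g\beta=(h,\bullet)$, so it is either the group vertex $h$ or some $(h',\bullet)$ with $h'\in B$, and every neighbour of such a vertex has degree at most $|G|$ --- a contradiction. Thus $g\beta\in G$ (and the argument of \autoref{biautG} in fact shows $\beta$ acts on $G$ by left multiplication, though only $g\beta\in G$ is needed here).

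With $g\beta\in G$ established, the rest is the proof of \autoref{holefiller} with one cosmetic addition. One well-orders $R(g)$ and $R(g\beta)$ by the pairwise-distinct cardinalities of the $R_a$, shows $\beta|_{R(g)}\colon R(g)\to R(g\beta)$ is a monomorphism via the degree bookkeeping at $g$, and then analyses preimages of the degree-$3$ attaching vertices $(g\beta,p_b)$: any preimage has degree $\le 3$, hence is $(k,c)$, $(k,p_c)$, or --- the only new candidate --- a pendant vertex $(k,\bullet)$ with $k\notin B$. The pendant case is impossible because $(g\beta,p_b)$ lies on a $6$-cycle (again inherited from $\Gamma$) whereas a pendant vertex lies on none; the case $(k,c)$ is excluded exactly as in \autoref{holefiller}. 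Hence every $(g\beta,p_b)$ has a preimage of the form $(k,p_c)$, so, as each $R_a$ has cardinality exceeding $|G|$, $\beta$ induces a bijection $R(g)\to R(g\beta)$; this bijection preserves the cardinality well-order for the same reason as in \autoref{holefiller}, and therefore $(g,R_a)\beta=(g\beta,R_a)$ for all $a\in A$.

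The step I expect to be the main obstacle is precisely the exclusion of $g\beta=(h,\bullet)$ with $h\in B$: the clique on $B\times\{\bullet\}$ contributes vertices whose degree $|M|$ may be as large as $|G|$, so a plain degree count does not separate them from group vertices and one is forced into the ``look one step further into a gadget'' argument above, which exploits that the Cayley/clique part of $\Gamma^*$ contains no vertex of degree exceeding $|G|$. Everything else is a routine transcription of \autoref{holefiller}.
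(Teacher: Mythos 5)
Your overall architecture matches the paper's: transcribe the proof of \autoref{holefiller} and check that the new vertices $(g,\bullet)$ cause no trouble. Your second paragraph is actually a genuine improvement in explicitness: the paper never separately verifies that $g\beta\in G$, whereas your argument (a bimorphism cannot decrease degree, every neighbour $(ga^{-1},p_a)$ of $g$ has a gadget neighbour of degree exceeding $|G|$, while every neighbour of $h$ or of any $(h',\bullet)$ has degree at most $|G|$) is correct in the relevant infinite case, where $2|A|+1=|G|$ and $|B|\leq|G|$, and it correctly isolates the one candidate ($g\beta=(h,\bullet)$ with $h\in B$) that a bare degree count cannot kill.

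The genuine gap is in your exclusion of the pendant preimage: you rule out $(k,\bullet)\beta=(g\beta,p_b)$ with $k\notin B$ ``because $(g\beta,p_b)$ lies on a $6$-cycle whereas a pendant vertex lies on none.'' This reasons in the wrong direction. For an injective edge-preserving map, lying on a cycle is preserved \emph{forwards} (the image of a cycle is a cycle), so it constrains where a vertex can be sent, not what can be sent onto it; since a bimorphism may add edges, and hence create cycles, a vertex lying on no cycle can perfectly well be mapped onto a vertex lying on a $6$-cycle --- indeed maps that create cycles are the whole point of this paper. (Your degree arguments use the correct direction, $d(x)\leq d(x\beta)$; the cycle claim does not have an analogous backwards version.) So the surjectivity step --- that every $(g\beta,p_b)$ has a preimage of the form $(k,p_c)$ --- is not established by your argument. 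The paper instead argues forwards from the pendant: if $(f,\bullet)\beta=(g\beta,p_b)$ with $f\notin B$, then the unique neighbour $f$ must be sent to the group vertex $g\beta b$, and then nothing can be mapped onto the pendant $(g\beta b,\bullet)$. Using your own established fact that group vertices map to group vertices, you can recover this: $k\beta$ must be the unique group-vertex neighbour $g\beta b$ of $(g\beta,p_b)$, and if $g\beta b\notin B$ the preimage of the degree-one vertex $(g\beta b,\bullet)$ is forced to be a pendant whose neighbour maps to $g\beta b$, hence to be $(k,\bullet)$ itself --- a contradiction. Note, however, that when $g\beta b\in B$ the vertex $(g\beta b,\bullet)$ is not pendant (the paper's wording tacitly assumes it is), so this case needs its own argument; either way, the step requires real work and cannot be discharged by the cycle appeal.
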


\begin{proof}
Much like \autoref{holefiller}, our claim is that $\beta$ induces a bijection from $R(g)$ to $R(h)$ that preserves the well-order on cardinality in $R(g)$.

One can use a degree argument to show that each $(g,R_b)$ is mapped to some $(g\beta,R_c)$ for some $c\in A$. This must be at least a monomorphism; hence $\beta|_{R(g)}:R(g) \to R(g\beta)$ is a monomorphism.

As $\beta$ is a bimorphism, for every $(g\beta,p_b)$ there exists an $x\in \Gamma$ such that $x\beta = (g\beta,p_b)$. Because of this, the degree of $x$ must be less than or equal to the degree of $(g\beta,p_b)$, which is $3$. This means that the only potential preimages for $(g\beta,p_b)$ are some $(k,p_c)$ or $(k,c)$ for some $c$, or $(f,\bullet)$ for some $f\in G\setminus B$. Using a similar argument to \autoref{holefiller}, it follows that this potential preimage cannot be $(k,c)$.


So assume that $(f,\bullet)$ maps to $(g\beta,p_b)$. By a degree argument, it follows that $f$ must be mapped to $hb$. However, since $(f,\bullet)$ is mapped to $(g\beta,p_b)$, there is no other vertex adjacent to $f$ that can be mapped to the degree $1$ vertex $(hb,\bullet)$. Therefore, any preimage of $(g\beta,p_b)$ must be $(k,p_c)$ for some $k\in G$ and $c\in A$. Since this happens for all $(g\beta,p_b)$, and each $R_a$ has cardinality greater than that of $G$, it follows that $\beta$ induces a bijection from $R(g)$ to $R(g\beta)$.

Now, to show that $\beta$ preserves the well-order on $R(g\beta)$. Assume for a contradiction that this is not true. Therefore, there is a least element in $R(g\beta)$ (ordered by cardinality) such that $(g,R_a)\beta = (g\beta,R_b)$, where $R_b \neq R_a$. However, since $\beta$ is a bijection there must exist $(g,R_c)$ such that $(g,R_c)\beta = (g\beta,R_a)$; this cannot happen as $|R_c| > |R_a|$ by assumption. Therefore, this bijection must be order-preserving, and $(g,R_a)\beta = (g\beta,R_a)$.
\end{proof}

The proof of \autoref{betainside} applies in this case too, giving the following for free:

\begin{corollary}\label{betainsideagain}
Let $\beta\in \Bi(\Gamma^*)$. Then $(g,a)\beta = (g\beta,a)$ and $(ga)\beta = g\beta a$ for all $g\in G$ and $a\in A$. \qed
\end{corollary}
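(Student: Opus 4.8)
The plan is to reuse the proof of \autoref{betainside} essentially verbatim, because that proof used only the conclusion of \autoref{holefiller}, and \autoref{holefilleragain} supplies the analogous statement for $\Gamma^*$. The first step is to unpack what \autoref{holefilleragain} gives: since $\beta$ carries the induced subgraph $(g,R_a)$ isomorphically onto $(g\beta,R_a)$ and the gadgets are bimorphism-rigid (\autoref{nonbieq}), the restriction $\beta|_{(g,R_a)}$ must send $(g,v_a)$ to $(g\beta,v_a)$ for every $v_a\in R_a$; in particular $(g,p_a)\beta = (g\beta,p_a)$ and $(g,q_a)\beta = (g\beta,q_a)$ for all $g\in G$ and $a\in A$.

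The second step is to pin down $(g,a)\beta$. The vertex $(g\beta,p_a)$ has exactly the three neighbours $(g\beta,a)$, $g\beta a$, and $(g\beta,q_a)$ in $\Gamma^*$, since none of the edges introduced in \autoref{bigraph} — the pendant edges $\{h,(h,\bullet)\}$ and the clique on $B\times\{\bullet\}$ — is incident to a gadget vertex. Of these three, only $(g\beta,a)$ is adjacent to $g\beta$ in $\Gamma^*$: the group element $g\beta a$ and the gadget vertex $(g\beta,q_a)$ are both non-neighbours of $g\beta$, and the one new neighbour $(g\beta,\bullet)$ of $g\beta$ is not a neighbour of $(g\beta,p_a)$. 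As $g\beta$ and $(g\beta,p_a)$ are the images of $g$ and $(g,p_a)$ and $\beta$ preserves edges, the image of $(g,a)$ must be a common neighbour of $g\beta$ and $(g\beta,p_a)$; hence $(g,a)\beta = (g\beta,a)$.

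The third step handles $(ga)\beta$. The three neighbours $(g,a)$, $ga$, $(g,q_a)$ of $(g,p_a)$ must map under $\beta$ to neighbours of $(g\beta,p_a)$; two of these images are already determined, namely $(g,a)\beta = (g\beta,a)$ and $(g,q_a)\beta = (g\beta,q_a)$, so injectivity of $\beta$ forces $(ga)\beta$ to be the remaining neighbour $g\beta a$.

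I do not expect a real obstacle here: all the substance is in \autoref{holefilleragain}, and the one point that genuinely needs checking is that the additional top-layer vertices and edges of \autoref{bigraph} do not enlarge the neighbourhoods of $(g,p_a)$, $(g,a)$, or $ga$ relied on above — which they do not, since every edge added in \autoref{bigraph} is incident to some $(h,\bullet)$. Consequently the argument of \autoref{betainside} carries over word for word, which is exactly what the phrase ``giving the following for free'' is pointing at.
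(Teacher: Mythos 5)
Your proposal is correct and takes essentially the same route as the paper: the paper's own ``proof'' of this corollary is just the remark that the argument for \autoref{betainside} transfers verbatim, and that is precisely what you carry out, with your observation that every edge added in \autoref{bigraph} is incident to some $(h,\bullet)$ (so the neighbourhood of $(g\beta,p_a)$ is unchanged and $(g\beta,\bullet)\nsim(g\beta,p_a)$) being exactly the point the paper leaves implicit. One negligible quibble: $ga$ \emph{does} gain the new neighbour $(ga,\bullet)$ in $\Gamma^*$, but since your argument only uses the neighbourhood of $(g\beta,p_a)$ and the adjacencies of $g\beta$, not that of $ga$, this does not affect anything.
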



Finally, we can prove \autoref{maintheorem}. In the following, we will abuse notation and write $(C,\bullet)$ to mean the induced subgraph of $V\G^*$ on some $C\times \{\bullet\}\subseteq G\times\{\bullet\}$.


\begin{proof}[Proof of \autoref{maintheorem}]
Suppose that $B$ is the image of some group-embeddable monoid $M$ in a group $G$, and that $\G^*$ is the corresponding graph as constructed in \autoref{bigraph}. Let $g,h\in B$. Define a permutation $\alpha:\Gamma^*\to\Gamma^*$ as follows:
\begin{itemize}
\item For any $x\in G$, define $x\alpha = hg^{-1}x$;
\item $(x,a)\alpha = (hg^{-1}x,a)$ for all $x\in G$ and $a\in A$;
\item $(x,v_a)\alpha = (hg^{-1}x,v_a)$ for all $x\in G$, $v_a\in R_a$ and $a\in A$.
\item $(x,\bullet)\alpha = (hg^{-1}x,\bullet)$ for all $x\in G$.
\end{itemize}

If there exists $a\in B$ such that $ag = h$, then $ag = hg^{-1}g$. As group-embeddable monoids are cancellative, it follows that $a = hg^{-1}$. It follows that left multiplication in $B$ can be viewed as multiplication by those $hg^{-1}\in B$. Note that $\alpha$ acts as left multiplication by $hg^{-1}$ on $B$, should that element exist in $B$. Finally, note that if $g = h$, then $\alpha$ is the identity map. 

To prove the result, it is enough to show:

\begin{enumerate}[(1)]
\item $\alpha$ is a bimorphism of $\Gamma^*$ if and only if $hg^{-1}\in B$, and so the set of bimorphisms defined above act as left multiplication on $B$;
\item $\alpha$ is unique: so if $\beta$ is a bimorphism sending $g$ to $h$, with $g,h\in B$, then $\beta = \alpha$;
\item all bimorphisms of $\Gamma^*$ arise in this way. 
\end{enumerate}

For (1), \autoref{biautG} shows that $\alpha|_\Gamma$ is a bimorphism of $\Gamma$ regardless of the location of $hg^{-1}$ in $G$. Therefore, the statement of (1) reduces to proving that $\alpha$ preserves edges in $E\Gamma^*\setminus E\Gamma$ if and only if $hg^{-1}\in B$.

Assume that $hg^{-1}\in B$ and consider the edge $\{x,(x,\bullet)\}$. This is then sent by $\alpha$ to the pair $\{hg^{-1}x, (hg^{-1}x,\bullet)\}$; since $hg^{-1}x\in G$, this pair is an edge of $\Gamma^*$ by definition. Next, look at the edge $\{(a,\bullet), (b,\bullet)\}\in E\Gamma^*$; as this exists, it follows that $a,b\in B$. Since this happens, then $hg^{-1}a,hg^{-1}b\in B$ by assumption. It follows by definition that $\{(hg^{-1}a,\bullet),(hg^{-1}b,\bullet)\}$ is also an edge of $\Gamma^*$. This, together with \autoref{biautG}, shows that $\alpha$ is a bimorphism of $\Gamma^*$ when $hg^{-1}\in B$.

Next, assume that $hg^{-1}\notin B$. Since $|B|\geq 2$, it follows that there exists $y\in B$ that is not the identity element. Therefore, $(e,\bullet)\sim (y,\bullet)$ is an edge of $\Gamma^*$. However, $(e,\bullet)\alpha = (hg^{-1},\bullet)$ by definition; since $hg^{-1}\notin B$, it follows that $(hg^{-1},\bullet)\nsim (hg^{-1}y,\bullet)$ in $\Gamma^*$. So $\alpha$ is not a bimorphism of $\Gamma^*$ when $g^{-1}h\notin B$. This completes the proof of (1).

Now for (2). Assume that $\beta$ is another bimorphism of $\Gamma^*$ sending $g$ to $h$ in $B$. By \autoref{holefilleragain}, $\beta$ moves $(g,R_a)$ to the isomorphic induced subgraph $(g\beta,R_a) = (h,R_a)$ for all $a\in A$; it follows that $(ga)\beta = ha$ for all $a\in A$ by \autoref{betainsideagain}.

In addition, since every vertex in $\Gamma$ is adjacent to at least one vertex in some $(g,R_a)$, and every vertex in $V\Gamma^*\setminus V\Gamma$ is non-adjacent to every $(g,R_a)$, then no bimorphism sends a vertex of $V\Gamma$ to a vertex of $V\Gamma^*\setminus V\Gamma$. So the restriction $\beta|_{V\Gamma}$ of a bimorphism $\beta\in \Bi(\Gamma^*)$ to $\Gamma$ must be a monomorphism of $\Gamma$ at the very least. To prove that this must be a bimorphism of $\Gamma$, we must show that the preimage of any vertex of $\Gamma$ also lies in $\Gamma$. To do this, it is enough to prove that any vertex in $V\Gamma^*\setminus V\Gamma$ does not map into $V\Gamma$.

Since every vertex $(y,\bullet)$ of $V\Gamma^*\setminus V\Gamma$ is adjacent to $y\in\Gamma$, this edge must be preserved in any bimorphism $\beta$ sending $(y,\bullet)$ to some vertex of $V\Gamma$. This splits into two cases:
\begin{itemize}
\item If $y\in B$, then $(y,\bullet)$ is part of a complete graph of size at least $3$ contained in the induced subgraph $(B,\bullet)$, and this triangle must be preserved by any bimorphism $\beta$. However, the induced subgraph on $\Gamma^*\setminus (B,\bullet)$ contains no triangles (or indeed any complete graphs of size $3$ or larger). Therefore, elements of $(B,\bullet)$ cannot be mapped into $\Gamma$. As $(y,\bullet)\beta\in V\Gamma^*\setminus V\Gamma$ and the edge $\{y,(y,\bullet)\}$ must be preserved, the only element of $V\Gamma^*\setminus V\Gamma$ adjacent to $y\beta$ is $(y\beta,\bullet)$; and so $(y,\bullet)\beta = (y\beta,\bullet)$.

\item If $y\in G\setminus B$, then $(y,\bullet)$ has degree $1$ and is adjacent to $y\in V\Gamma$. Since $(y,\bullet)\beta\in V\Gamma$, and $y\beta\in V\Gamma$ by above, it follows that exactly one of the three cases must occur:
\begin{enumerate}[(a)]
 \item $(y,\bullet)\beta = (y\beta,\bullet)$;
 \item $(y,\bullet)\beta = (y\beta,a)$ for some $a\in A$ or
 \item $(y,\bullet)\beta = (y\beta b,p_{b^{-1}})$ for some $b\in A$.
\end{enumerate}
Of these, it cannot be case (b); since $(y,a)\beta = (y\beta,a)$ by \autoref{betainsideagain}. Also, it cannot be case (c); set $g = yb$ and $a = b^{-1}$ in \autoref{holefilleragain} and use \autoref{betainsideagain} to see that $(yb,p_{b^{-1}})\beta  = (yb\beta,p_{b^{-1}}) = (y\beta b, p_{b^{-1}})$. Therefore, it must be case (a) and so elements of $(G\setminus B, \bullet)$ cannot be mapped into $\Gamma$.
\end{itemize}

It follows that $\beta|_{V\Gamma}$ is a bimorphism of $V\Gamma$; so by \autoref{biautG}, $\beta = \alpha$ on $V\Gamma$. Work above also shows that \[(x,\bullet)\beta = (x\beta,\bullet) = (x\alpha,\bullet) = (hg^{-1}x,\bullet) = (x,\bullet)\alpha\] Therefore, $\beta = \alpha$ on $\Gamma^*$ as well.

Finally, for (3). It is enough to show that $g\in G$ is mapped to some other $x\in G$. By the work above, we know that $g$ cannot be moved to anywhere in $V\Gamma^*\setminus V\Gamma$ by any bimorphism; work in the proof of \autoref{biautG} then applies to show that $g$ can only be mapped to another group element in $G\subseteq V\Gamma$. This then completes the proof, as any bimorphism mapping $g$ to $x$ is unique and acts like left multiplication.
\end{proof}

Following Ore's theorem, the bimorphism monoid of a graph $\Gamma$ is a natural example of a \emph{cancellative monoids}. A monoid $M$ is \emph{left-cancellative} if for all $a,b,c\in M$, then $ab = ac$ implies $b=c$; equivalently, a monoid is left-cancellative if the action by $M$ on itself by left multiplication is injective. \emph{Right-cancellative monoids} are defined analogously, and a monoid is cancellative if and only if it is both left- and right-cancellative \cite{clifford1961algebraic}. Examples of left-cancellative monoids include \emph{epimorphism monoids} of graphs (the monoid of all surjective endomorphisms), and examples of right-cancellative monoids include \emph{monomorphism monoids} of graphs (the monoid of all injective endomorphisms) \cite{coleman2017automorphisms}. Therefore, there is a natural question to ask:

\begin{question}\label{q1}
Does every left-cancellative (right-cancellative) monoid arise as the epimorphism (monomorphism) monoid of a graph $\Gamma$?
\end{question}

We note that a similar question is \emph{not necessarily} true for bimorphism monoids, as not every cancellative monoid is group-embeddable. We therefore refine \autoref{q1}:

\begin{question}\label{q2}
What class of left-cancellative (right-cancellative) monoids can be represented as epimorphism (monomorphism) monoid of a graph $\Gamma$?
\end{question}

\end{document}